\newtheorem{thm}{Theorem}[subsection]
\newtheorem{lemma}[thm]{Lemma}
\newtheorem{lemmadefi}[thm]{Lemma - Definition}
\newtheorem{prop}[thm]{Proposition}
\newtheorem{claim}[thm]{Claim}
\newtheorem{fact}[thm]{Fact}
\theoremstyle{definition}
\newtheorem{defi}[thm]{Definition}
\newtheorem{nota}[thm]{}
\newtheorem{remark}[thm]{Remark}
\numberwithin{equation}{section}
\newtheorem{example}[thm]{Example}
\newcommand{\la}{\longrightarrow}
\newcommand{\ha}{\hookrightarrow}
\newcommand{\ov}{\overline}
\newcommand{\Aut}{\operatorname{Aut}}
\newcommand{\Out}{\operatorname{Out}}
\newcommand{\Z}{\mathbb{Z}}
\newcommand{\R}{\mathbb{R}}
\newcommand{\G}{\Gamma }
\def\mo{\underline{0}}
\def\Mgb{\overline{M}_g}
\newcommand{\Mgpnn}{M_{g,n}^{\rm pure}}
\newcommand{\Mgtn}{{M_{g,n}^{\rm trop}}}
\newcommand{\Mgrn}{M_{g,n}^{\rm reg}}
\newcommand{\Mgp}{M_g^{\rm pure}}
\newcommand{\Mgt}{{M_g^{\rm trop}}}
\newcommand{\Mge}{M_g^{\rm trop}[3]}
\newcommand{\Mgpe}{M_g^{\rm pure}[3]}
\newcommand{\Mgre}{M_g^{\rm reg}[3]}
\newcommand{\Mgr}{M_g^{\rm reg}}
\newcommand{\Agt}{A_g^{\rm trop}}
\newcommand{\Sgt}{{\rm Sch}_g^{\rm trop}}
\newcommand{\Sgp}{\rm{Sch}_g^{\rm pure}}
\newcommand{\tgt}{t_g^{\rm trop}}
\newcommand{\tgp}{t_g^{\rm pure}}
\begin{document}

\title{Geometry of tropical moduli spaces and linkage of  graphs}
\author{Lucia Caporaso}
\address{Dipartimento di Matematica,
Universit\`a Roma Tre,
Largo S. Leonardo Murialdo 1,
00146 Roma (Italy)}
\email{caporaso@mat.uniroma3.it}
%\keywords{}
%\subjclass[2000]{14TXX, 05CXX}
%
\maketitle
%\begin{center}\today\end{center}
 \begin{abstract}
 We prove the following ``linkage" theorem:  two $p$-regular graphs of the same genus can be obtained from one another by a finite alternating sequence of one-edge-contractions; moreover  this preserves
  3-edge-connectivity.  
We use the linkage theorem to prove that various moduli spaces of tropical curves 
 are connected through codimension one.
  \end{abstract}
% \subjclass[MSC 2000]{Primary  {14H10, 14TXX}; Secondary  {05CXX}}
%\keywords{\replace{smooth curve, stable curve, tropical curve, metric graph, moduli space,  Teichm\"uller approach}}

\tableofcontents

\section{Introduction}

This paper is made of two parts, with the second partially motivating the first.
The second part  studies the moduli spaces of tropical curves;
in order to establish some remarkable connectedness properties, we encounter some questions about graphs
which are of interest in their own right. 
The solution of these graph theoretic problems occupies the first part of this paper.

Let us describe the two parts  in some details. The first is concerned with classification of $p$-regular (every vertex has valency $p$) connected graphs. It is quite easy to see that there exists a unique $1$-regular graph, namely two vertices joined by a unique edge. Similarly,
$2$-regular graphs are classified by the number of their edges, indeed
there exists a unique $2$-regular graph with $n$-edges: the cycle on $n$ vertices,
and these are all the 2-regular graphs.
As soon as $p\geq 3$ the situation gets complicated;  as a matter of fact,
as far as we are aware of, the number of $3$-regular graphs with fixed first Betti number
is not known.
And this number would be very interesting  for several reasons; for instance, it counts the
$0$-dimensional combinatorial cycles in the moduli space of Deligne-Mumford stable curves, $\Mgb$.
See \cite{art} for more   on this  issue.

Our main result in the first part of the paper is Theorem~\ref{main}.
This states, first of all, that any two $p$-regular connected  graphs
  $\Gamma$ and $\Gamma '$,  with the same first Betti number,
are ``linked", i.e. they can be obtained one from the other with a finite sequence of alternating one-edge contractions as follows.
There exists a finite sequence
\begin{equation}
\xymatrix{
\Gamma=\Gamma_1 \ar[rd] ^{ }&&\Gamma_3\ar[ld] _{}\ar[rd]^{}&\ldots &  \ldots&\ar[ld]_{}\Gamma_{2h+1}=\Gamma ' \\
 & \Gamma_2   
 &  &\ldots& \Gamma_{2h}&\\
}
\end{equation}
where every arrow is the  map  contracting precisely one edge and leaving everything else
unchanged. Also, every odd-indexed graph
in the diagram above is $p$-regular.
Secondly, we prove that if $\Gamma$ and $\Gamma'$ are 3-edge-connected there exists a diagram as above where   the   graph  $\Gamma_i$ is  
3-edge-connected, for every $i=1,\ldots, 2h+1$. This second part makes the proof seriously more complicated, but it does play an important  role in the application of this result to the second part of the paper.
We refer to this property as the ``conservation of 3-edge-connectivity".

In  case $p=3$  the result, without the conservation of 3-edge-conectivity, is due to A.Hatcher and W.Thurston 
\cite{HT}, by a  non combinatorial argument;
a combinatorial proof valid for simple graphs is given by Y.Tsukui  \cite{T}.

Our proof is purely combinatorial. We first reduce it to hamiltonian graphs (in Subsection~\ref{hamsec}), and then show that every hamiltonian  graph is linked to a special type of graph  called the $p$-polygon (in Subsection~\ref{linksec}).

Now we turn to the part concerning moduli of tropical curves, which occupies Section~\ref{tropsec}. 
The moduli space, $\Mgt$, of tropical curves  of genus $g$,  
and the moduli space of $n$-pointed tropical curves, $\Mgtn$,
are here treated simply as topological spaces. The point is, their geometry is   so complex   that they don't look like  tropical varieties (the case $g=0$ is an exception); in fact the problem to find a  ``good" category in which they should be placed is under investigation, and still awaits to be resolved.
In a similar vein, it is interesting to study which topological properties of tropical varieties are also valid for those moduli spaces.  

One of the characterizing properties of tropical varieties (defined by prime ideals)
is that they are ``connected through codimension one"; see the  Structure Theorem in \cite[Ch. 3]{MS}.
The goal of Section~\ref{tropsec} is thus to establish that several moduli spaces of tropical curves are connected through codimension one; our motivating observation  was that this property
is strictly  related to the linkage properties of graphs studied in the first part of the paper.

Let us now give more details.
In this paper,
together with
the original notion of tropical curve,  here called
``pure tropical curve",  due to G. Mikhalkin  (see \cite{MIK2}),  
we use the generalization     given  by S. Brannetti, M. Melo and F. Viviani in \cite{BMV};
the advantage of the generalized notion is that, with it, 
the moduli space is closed under specialization, while the moduli space of pure tropical curves is not
(see \cite{BMV} or \cite{CHBK} for details). 

We are interested in the spaces $\Mgtn$, and also in
the Schottky
space, $\Sgt$, defined as the quotient of $\Mgt$ via the Torelli map, 
studied in \cite{BMV} and \cite{chan}.
They are   easily   seen to be connected, however a stronger property holds, as we are going to explain.
Let us focus on $\Mgt$ for simplicity; we have  a finite decomposition
$\Mgt=\sqcup_{i\in I} M_i$ where each $M_i$ is  
   a connected orbifold,
and  $\Mgt$ is the closure of the union of those $M_i$ having maximal dimension (equal to $3g-3$).
Every $M_i$ has a clear geometric interpretation, for example
the above mentioned dense union
$$
\Mgr:=\bigsqcup_{i\in I: \dim M_i=3g-3}M_i\subset \Mgt=\bigsqcup_{i\in I}M_i
$$
parametrizes genus-$g$ tropical curves whose underlying graph is $3$-regular.
Moreover $\Mgr$ is open in $\Mgt$. 
Now, $\Mgr$ is clearly not connected, whereas
  $\Mgt$ is so, therefore one can  ask:
if we add to $\Mgr$ all strata $M_i$ of  codimension one  (i.e. of dimension $3g-4$), do we get a connected space?
Equivalently: is $\Mgt$ connected through codimension one?

The answer to the question is yes, 
and, as we said, this follows from the linkage theorem for graphs.
In fact, by  Proposition~\ref{connt1} , connectedness through codimension one holds   for  all $\Mgtn$. The proof  is based on an extension of Theorem~\ref{main} for $p=3$ to graphs with legs, Proposition~\ref{linkn}.

Next, by the tropical Torelli theorem   of \cite{CV}, and its generalization in \cite{BMV}, 
 the  Schottky  locus  $\Sgt$ 
 is the image via the Torelli map of the locus in $\Mgt$
parametrizing 3-edge-connected tropical curves.
This motivates our interest    in 3-edge-connected graphs.
Indeed, the fact that graph linkage   preserves 3-edge-connectivity, enables us to prove  that $\Sgt$ is connected through codimension one;
see Theorem~\ref{conn1}.

{\it Acknowledgments.} I am grateful to Margarida Melo and Filippo Viviani for their precious remarks on  this paper and its previous versions, and to the referees for correcting several inaccuracies.

\section{The linkage theorem}
\subsection{Terminology.}
 Throughout the paper, $p, g$ and $n$ will be integers with $p\geq 3$ and $g,n\geq 0$.
 
 $\Gamma$ always denotes  a   graph 
(i.e. a one dimensional finite simplicial  complex),
  $V(\Gamma)$
 the set of its vertices (or $0$-cells)  and    $E(\Gamma)$ the set of its edges (or $1$-cells). 
Every $e\in E(\Gamma)$ joins two, possibly equal, vertices,   called
the {\it endpoints} of $e$. If the two endpoints of $e$ coincide we say that $e$ is a {\it loop}.

We assume 
all graphs   to be  connected, unless we specify otherwise.  The combinatorial definition of   graph is in Definition~\ref{gdef}.

The first Betti number, or the genus,  of $\Gamma$ is
$ 
b_1(\Gamma)=|E(\Gamma)| - |V(\Gamma)| + c 
$ 
 where $c$ is the number of connected components of $\Gamma$.

Using the standard terminology (see \cite{Die})  a graph $\Gamma$ is  called
\begin{enumerate}
\item
 $p${\it -regular} if every vertex has valency (or degree) equal to $p$;
\item
a {\it path} if its first Betti number is equal to 0, and if it contains no vertex of valency 
$\geq 3$. A path $\Gamma$ satisfies $|V(\Gamma)|=|E(\Gamma)|+1$;
we shall say that $|E(\Gamma)|$ is the length of the path.
\item
a {\it cycle} if it is   2-regular. A cycle     has 
$b_1(\Gamma)=1$, and hence an equal number of edges and vertices; this number will be called its   length.
 \item
 {\it $p$-edge-connected} if $|V(\Gamma)|\geq 1$ and if $\Gamma\smallsetminus F$ is connected for any $F\subset E(\Gamma)$ with $|F|< p$.
\end{enumerate}

\begin{nota}{\it Contractions and  linkage.}
\label{cont}
Fix $\Gamma$ and    $e\in E(\Gamma)$. Let 
$\Gamma/e$ be the graph obtained by contracting $e$ to a point and leaving everything else unchanged  (\cite[sect I.1.7]{Die}). Then there is a natural surjective map
  $\Gamma \to \Gamma '$, called    the {\it contraction} of $e$.
  More generally, if $S\subset E(\Gamma)$ is a set of edges, we denote by $\Gamma/S$ the contraction of every edge in $S$ and denote by  $\sigma:\Gamma\to \Gamma/S$ the associated map.
  Let $T:=E(\Gamma)\smallsetminus S$. Then there is a natural identification between
  $E(\Gamma/S)$ and $T$. Moreover   $\sigma$ induces a surjection
  $$
  \sigma_V:V(\Gamma)\la V(\Gamma/S);\  \  \  v\mapsto \sigma(v).
  $$
Notice that every connected component of 
  $\Gamma - T$ 
  (the graph obtained from $\Gamma$ by  removing every edge in $T$)
  gets contracted to a vertex of $\Gamma/S$; conversely, for every vertex $\ov{v} $ of $\Gamma/S$ its preimage $\sigma^{{-1}}(\ov{v})\subset \Gamma$ is a connected component of  $\Gamma - T$.
  In particular, we obtain the following useful identity:
  \begin{equation}
  \label{gendec}
b_1(\Gamma- T)=\sum_{\ov{v}\in V(\Gamma/S)}b_1(\sigma_V^{{-1}}(\ov{v})).
\end{equation}
  \end{nota}
\begin{remark}
\label{lme}
Let $\sigma:\Gamma\to \Gamma/S$ be the contraction of $S$ as above.
The following facts are well known and easy to prove.
\begin{enumerate}
 \item
$  b_1(\Gamma)=b_1(\Gamma/S)+b_1(\Gamma\smallsetminus T).$
  \item
 If $\Gamma$ is $p$-edge-connected so is $\Gamma/S$.
  \end{enumerate}
\end{remark}

\begin{defi}
\label{link}\begin{enumerate} 
\item
Let $\Gamma_1$ and $\Gamma _2$ be two   graphs.
We say that 
 $\Gamma_1$ and $\Gamma _2$ are {\it strongly linked} if for $i=1,2$ there exists
a non-loop edge  $e_i\in E(\Gamma_i)$
 such that
 the contraction of $e_1$ and the contraction of $e_2$ coincide, i.e.
$$
 \Gamma_1 \stackrel{\sigma_1}{\la} \Gamma_1/e_1=\Gamma_2/e_2\stackrel{\sigma_2}{\longleftarrow}\Gamma_2, 
$$
 and  
 $\sigma_1(e_1)=\sigma_2(e_2)$ (i.e. $e_1$ and $e_2$ are mapped  to  the same vertex).
 \item
Let $\Gamma$ and $\Gamma '$ be two   graphs.
We say that $\Gamma$ and $\Gamma '$ are {\it linked} if there exists a
finite sequence of graphs
$$
\Gamma =  \Gamma_1, \Gamma_2,\ldots,    \Gamma_{n-1},\Gamma_n=\Gamma '
$$
such that $\Gamma_i$ and $\Gamma_{i-1}$ are strongly linked, for every $i=2,\ldots, n$.
\end{enumerate}
  \end{defi}
  We are particularly interested in 3-edge-connected   graphs, so we need
 the following variant.
  \begin{defi}
\label{3link}
Let $\Gamma$ and $\Gamma '$ be two   3-edge-connected   graphs.
We say that $\Gamma$ and $\Gamma '$ are {\it 3-linked} 
  if there exists a
finite sequence of   3-edge-connected  graphs
$$
\Gamma=  \Gamma_1, \Gamma_2,\ldots,    \Gamma_{n-1},\Gamma_n=\Gamma '
$$
such that 
  $\Gamma_i$ and $\Gamma_{i-1}$ are strongly linked, for every $i=2,\ldots, n$.
 \end{defi}

  \begin{remark}
  \label{}
Being linked, or 3-linked  is an equivalence relation.
Linked graphs have
 the same number of edges and  vertices.
\end{remark}

\begin{example}
\label{pet}
The next picture represents two strongly linked $3$-regular graphs, with    $\Gamma_1/e_1$ 
  equal to $\Gamma_2/e_2$. $\Gamma_1$ is   called ``Petersen" graph.

 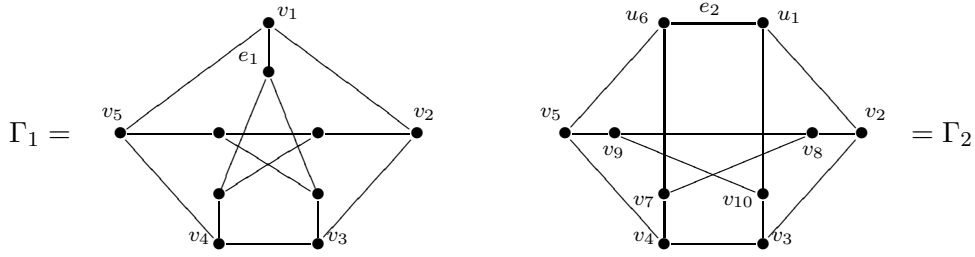
\begin{figure}[!htp]
\label{fig1}
$$\xymatrix@=1pc{
&&&&*{\bullet}&&&&&&
&&*{\bullet}&&*{\bullet}\ar@{-}[ll]_<(.5){e_2}&&&
\\
&&&&*{\bullet}\ar@{-}[u]^<(.2){e_1}_>(1.4){v_1}&&&&
\\
\Gamma_1=&*{\bullet}\ar@{-}[uurrr]^<{v_{5}}&&*{\bullet}\ar@{-}[ll]&&*{\bullet}\ar@{-}[ll]&&*{\bullet}\ar@{-}[ll]\ar@{-}[uulll]_<{v_2}&
&&*{\bullet}\ar@{-}[uurr]^>{u_{6}}^<{v_{5}}&*{\bullet}\ar@{-}[l]&&&&*{\bullet}\ar@{-}[llll]\ar@{-}[r]&*{\bullet}\ar@{-}[uull]_>{u_1}_<{v_2}&=\Gamma_2
\\
&&&*{\bullet}\ar@{-}[uur]\ar@{-}[urr]&&*{\bullet}\ar@{-}[uul]\ar@{-}[ull]&&&&
&&&*{\bullet} \ar@{-}[uuu]\ar@{-}[urrr]_>{v_8}&&*{\bullet}\ar@{-}[uuu]\ar@{-}[ulll]^>{v_9}
\\
&&&*{\bullet}\ar@{-}[uull]\ar@{-}[u]^<{v_4}&&*{\bullet}\ar@{-}[uurr]\ar@{-}[ll]\ar@{-}[u]_<{v_3}&&&
&&&&*{\bullet}\ar@{-}[uull]\ar@{-}[u]^<{v_4}^>{v_7}&&*{\bullet}\ar@{-}[uurr]\ar@{-}[ll]\ar@{-}[u]_<{v_3}^>{v_{10}}\\
}$$
\caption{Petersen graph strongly linked to a  hamiltonian graph.}
\end{figure}
\end{example}

\

 \begin{remark}
\label{count}
Let $\Gamma$ be a $p$-regular graph with $p\geq 3$; set $b=b_1(\Gamma)$. We have $|E(\Gamma)|=|V(\Gamma)|p/2$ hence
$$
b=1+\frac{(p-2)|V(\Gamma)|}{2},\  \   \  |V(\Gamma)|=\frac{2b-2}{p-2} \  \text{ and } \  |E(\Gamma)|=\frac{p(b-1)}{p-2}.
$$
If  $\Gamma$ is $3$-regular,
$|E(\Gamma)|=3b-3$ and $|V(\Gamma)|=2b-2$.
\end{remark}

\subsection{$p$-regular   hamiltonian graphs}
\label{hamsec}
\begin{defi}
\label{polydef}
A graph $\Gamma$ is called {\it hamiltonian}  if $|V(\Gamma)|\geq 2$ and if
 it contains a {\it hamiltonian cycle}, i.e.  a cycle
  passing through every vertex (exactly once).
   A $p$-regular hamiltonian graph
   free from loops is called a {\it p-hamiltonian} graph.
   \end{defi}
 %\begin{remark}\label{polyrk}
%By Remark~\ref{count}, any hamiltonian cycle in a 3-regular hamiltonian graph $\Gamma$  has length
 % $2b_1(\Gamma)-2$.
%Observe that a 3-regular hamiltonian  graph has no loops,  nor separating edges.\end{remark}
Examples of $p$-hamiltonian graphs are all the graphs in figures 3, 4 and 5. 
In Figure 1, the graph $\Gamma_2$ (on the right)  is hamiltonian, with $b_1(\Gamma_2)=5$,
whereas the graph $\Gamma_1$ is not hamiltonian.
So, Example~\ref{pet} implies that the non hamiltonian graph  $\Gamma_1$ is linked to the $3$-hamiltonian  graph $\Gamma_2$. This is true in general, by the following Proposition~\ref{poly}.

In the next two proofs  we will use the following  terminology.
To every edge $e$ of a graph $\Gamma$ we associate two {\it half-edges}, $h$ and $h'$, defined as follows.
Call $v,v'\in V(\Gamma)$ the two (possibly equal) endpoints of $e$. Then
$h$ and $h'$ are line segments   such that $h\subsetneq e$,  $h'\subsetneq e$,
$e=h\cup h'$ and such that
each of them contains precisely one end of $e$, so $v\in h$ and $v'\in h'$.
There are many possible choices for the half-edges of any edge, but we shall assume that such a choice is made; in fact everything we will say does not depend on this choice.
For example: the valency of   $v\in E(\Gamma)$ 
is  equal to the number of half-edges of $\Gamma$ touching $v$  (see also \ref{gdef}).

\begin{prop}
\label{poly}
Every  $p$-regular    graph 
$\Gamma$ is linked to a $p$-hamiltonian graph.
Every  $p$-regular,  3-edge-connected graph is
3-linked to a 3-edge-connected  $p$-hamiltonian graph.
\end{prop}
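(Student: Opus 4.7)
The plan is to argue by lexicographic induction on the pair $(D(\Gamma),L(\Gamma))$, where $D(\Gamma):=|V(\Gamma)|-\ell(\Gamma)$, with $\ell(\Gamma)$ the length of a longest cycle of $\Gamma$, and $L(\Gamma)$ is the number of loops of $\Gamma$. The base case $(0,0)$ is tautologically the condition of being $p$-hamiltonian, so there is nothing to show.

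\emph{Cycle-extension step.} Assume $D(\Gamma)>0$, fix a longest cycle $C=w_0w_1\cdots w_{\ell-1}w_0$, and, using connectedness, a vertex $v\notin V(C)$ joined to some $w_i\in V(C)$ by a (necessarily non-loop) edge $f$. The plan is to link $\Gamma$ to a new graph $\Gamma'$ by a single strong linkage on $f$: contract $f$ to a vertex $z$ of valency $2p-2$, and split $z$ into two new vertices $v^*,w^*$ joined by a new edge $e^*$, choosing the partition of the $2p-2$ half-edges at $z$ so that the half-edge of $w_i$ pointing along $C$ to $w_{i-1}$ lies on the $v^*$-side and the half-edge of $w_i$ pointing along $C$ to $w_{i+1}$ lies on the $w^*$-side; the remaining $2p-4$ free half-edges are distributed arbitrarily to bring each side to cardinality $p-1$. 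The resulting $\Gamma'$ is $p$-regular, and the sequence $w_0,\ldots,w_{i-1},v^*,w^*,w_{i+1},\ldots,w_{\ell-1}$ traces a cycle of length $\ell+1$ in $\Gamma'$. Hence $D(\Gamma')<D(\Gamma)$ and the induction closes.

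\emph{Loop-removal step.} If $D(\Gamma)=0$ but some vertex $v$ carries a loop $\lambda$, one picks a hamiltonian-cycle edge $e$ incident to $v$ and performs a strong linkage at $e$, choosing the split so that the two half-edges of $\lambda$ end up on opposite sides of the new edge $e^*$. The loop $\lambda$ is thereby converted into an honest non-loop edge (parallel to $e^*$), the hamiltonian cycle is preserved after being rerouted through $e^*$, and $L$ strictly drops while $D$ remains zero, closing this half of the induction.

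\emph{The 3-edge-connected case.} By Remark~\ref{lme}(2), contractions preserve 3-edge-connectivity, so the only point to verify is that the graph $\Gamma'$ produced by each split is again 3-edge-connected. The main obstacle sits exactly here: an unfortunate choice of the partition of the $2p-4$ free half-edges at $z$ can create a 2-edge-cut of $\Gamma'$ that involves the new edge $e^*$, or one that isolates $v^*$ or $w^*$ from the rest of $\Gamma'$. The plan is to exploit the 3-edge-connectivity of $\Gamma$ to exhibit a good partition in every local configuration; the constraints are tightest at $p=3$, where each side of the split receives only one free half-edge in addition to the two constrained cycle half-edges, so one expects to need a case-by-case analysis (parallel edges at $v$ or $w_i$, adjacency of $v$ to $w_{i\pm 1}$, loops forced by low valency, etc.) and, in recalcitrant configurations, replacement of the single cycle-extension move by a short chain of strong linkages all of whose intermediate graphs are 3-edge-connected.
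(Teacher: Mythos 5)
Your cycle-extension step is exactly the paper's argument: contract the edge joining an off-cycle vertex $v$ to the longest cycle, then re-split the resulting $(2p-2)$-valent vertex so that the two cycle half-edges land on opposite sides of the new edge, lengthening the longest cycle by one. Your loop-removal step likewise reproduces Lemma~\ref{noloops}; the paper organizes this as a descending induction on $\ell(\Gamma)$ followed by a separate lemma rather than one lexicographic induction, but that difference is cosmetic. One small point in your loop step: to restore $p$-regularity you must also move one rebalancing half-edge from the $w^*$-side to the $v^*$-side, and it must be chosen not to belong to an edge touching $v$, otherwise you create a new loop and $L$ need not drop; the paper verifies that such a half-edge exists by a valency count.

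The genuine gap is the 3-edge-connected case. You correctly locate the difficulty --- an unlucky partition of the free half-edges at $z$ can produce a $2$-edge-cut of $\Gamma'$ involving $e^*$ --- but you only announce a plan (``case-by-case analysis'', ``a short chain of strong linkages in recalcitrant configurations'') without carrying it out, and it is not clear from your sketch that a single re-split always suffices or what the chain of intermediate 3-edge-connected graphs would be. The paper closes this step by invoking the proof of \cite[Prop.~A.2.4]{CV}, which shows that a valency-reducing extension of a vertex of valency at least $4$ in a 3-edge-connected graph can always be chosen so that the extended graph is again 3-edge-connected; the same citation is what handles 3-edge-connectivity in the loop-removal step. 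Without that input, or an equivalent self-contained argument (for instance via Lemma~\ref{crit}(\ref{crit2}), by exhibiting two cycles of $\Gamma'$ meeting only in the new edge), the second assertion of the proposition remains unproved in your write-up.
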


\begin{proof}
Let $\Gamma$ be our $p$-regular graph, and $b=b_1(\Gamma)$.
Call $\ell (\Gamma)$ the maximal length of a cycle contained in $\Gamma$;
by Remark~\ref{count} we have
     $\ell  (\Gamma) \leq |V(\Gamma)|=\frac{2b-2}{p-2}$. We shall use descending induction on $\ell (\Gamma)$. 
If $\ell  (\Gamma)=\frac{2b-2}{p-2}$ there is nothing to prove, so the basis of the induction is done.

Assume $\ell  (\Gamma)< \frac{2b-2}{p-2}$; let $\Delta\subset \Gamma$ be a  cycle of  length
$\ell=\ell  (\Gamma)$.

For consistency with
 Definition~\ref{link}  we set $\Gamma_1=\Gamma$. 
We shall  explicitly construct a $p$-regular    graph, $\Gamma_2$, strongly linked to $\Gamma$
and such that $\ell  (\Gamma_2) > \ell  (\Gamma)$.
If $\Gamma$ is 3-edge-connected so will be $\Gamma_2$.
Using the induction hypothesis on $\Gamma_2$ will suffice to complete the proof.
 Denote $V(\Delta)=\{v_1,\ldots, v_{\ell}\}\subsetneq V(\Gamma)$.
 The forthcoming construction is pictured   in Figure 2.

Pick a vertex $v\in V(\Gamma)$ such that $v\not\in V(\Delta)$
and such that there is an edge $e$ joining $v$ to one of the vertices of $\Delta$;
obviously $e\not\in E(\Delta)$.
We can assume, with no loss of generality, that the endpoints of $e$ are $v_1$ and $v$. 
Let us call $e_1$ and $e_{\ell}$ the two edges of $\Delta$  meeting at $v_1$.

Since $v$ has valency $p$, there 
are $p-1$ half-edges containing $v$ and not contained in $e$; 
 let us call them $h_1,\ldots, h_{p-1}$.
Similarly as $v_1$ has valency $p$ there 
are $p-3$ half-edges containing $v_1$ and not contained in $e$, $e_1$ or $e_{\ell}$;
we call these $h_{p},\ldots, h_{2p-4}$. 
 It is clear that  no half-edge $h_i$     lies in $\Delta$.
Consider  the contraction of $e$
$$\sigma_1:\Gamma_1=\Gamma \la \Gamma/e=\Gamma '.
$$
Clearly $w:=\sigma_1(e)$ is a vertex of valency $2p-2$, 
indeed
the images via  $\sigma_1$ of $e_1, e_{\ell},  h_1, \ldots h_{2p-4}$
all touch $w$, and there is no other edge   touching $w$.

Now we perform a  valency reducing extension  on $w$ (cf. \cite[A.2.2]{CV}); 
namely we introduce an edge contracting map $ 
\sigma _2:\Gamma _2\la \Gamma '
$  from a new graph $\Gamma_2$
such that $ \Gamma '$ is obtained from $\Gamma_2$ as the contraction  to $w$ of a unique edge, which we call
$e_{\ell+1}$; hence $\sigma_2(e_{\ell+1})=w$ and $\sigma _2$ leaves everything else unchanged.   The two endpoints of $e_{\ell+1}$ are two vertices of
valency $p$, which we call $u_{\ell+1},u_1 \in V(\Gamma _2)$.
In $\Gamma_2$ we distribute the $2p-2$ half-edges $e_1, e_{\ell}, h_1,\ldots ,h_{2p-4}$
so that $p-1$ of them  touch $u_1$ and 
the remaining $p-1$ touch   $u_{\ell+1}$. Moreover we have the (old) edge 
$e_1$ touching
  $u_{1}$, the old edge $e_{\ell}$ touching $u_{\ell+1}$,
  and the new edge
  $e_{\ell+1}$  joining $u_1$ with $u_{\ell+1}$.
Therefore the graph $\Gamma_2$ is $p$-regular.
Summarizing, we have
$$
\Gamma _2\stackrel{\sigma _2}{\la} \Gamma _2/e_{\ell+1}=\Gamma/e \stackrel{\sigma _1}{\longleftarrow} \Gamma.
$$
Therefore $\Gamma$ and $\Gamma_2$ are strongly linked.
Now the given cycle $\Delta\subset \Gamma$ is mapped to a cycle of the same length by $\sigma_1$,
whereas $\sigma_2^{-1}(\sigma_1(\Delta))$ is a cycle of length at least $\ell +1$
(as it contains the vertices $\{u_1, v_2,\ldots, v_{\ell}, u_{\ell+1}\}$).
Therefore $\ell(\Gamma)<\ell (\Gamma _2)$.
It is clear that by iterating this construction we arrive at a $p$-regular graph 
$\widehat{\Gamma}$ with $\ell(\widehat{\Gamma})=(2b-2)/(p-2)$, so that $\widehat{\Gamma}$ is   hamiltonian graph. It is also clear that $\widehat{\Gamma} $ and $\Gamma$ are linked.

Now  suppose that $\Gamma$ is 
 3-edge-connected; then  $\Gamma'$ is also 
3-edge-connected by Remark~\ref{lme}. 
To prove that 
$\widehat{\Gamma}$ is     3-edge-connected we need to prove that
the extension of $w$ used during the proof may be constructed so as to yield  a 3-edge-connected
graph $\Gamma_2$.  This follows from the proof of \cite[Prop. A.2.4]{CV}, with trivial modifications.
Finally, by the next lemma~\ref{noloops},
  we can take $\widehat{\Gamma}$ free from loops.
\end{proof}
The next  picture  illustrates this proof. We represent the relevant portions of $\Gamma$, on the left, of $\Gamma'$  and of $\Gamma_2$.
The vertices $v_2$ and $v_{\ell}$ belong to the cycle $\Delta$, hence they are joined by a path 
(not drawn)  not intersecting $h_1$ and $h_2$. 

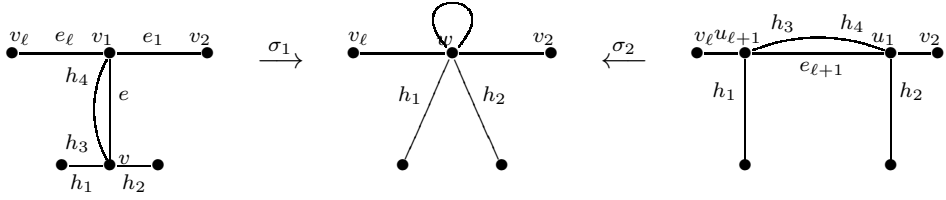
\begin{figure}[!htp]
\label{incr}
$$\xymatrix@=1pc{
*{\bullet}\ar@{-}[rr]^>{v_{1}}^<{v_{\ell}} ^<(0.5){e_{\ell}} &&*{\bullet}  \ar@{-}@/_/[dd]_<(0.2){h_4}_>(0.8){h_3} \ar@{-}[dd]^>(0.4){e}^> {v}\ar@{-}[rr]^>{v_{2}}^<(0.4){e_1} &&*{\bullet}&\stackrel{\sigma_1}{\la}&
*{\bullet}\ar@{-}[rr]^>{w}&&*{\bullet} \ar@{-}@(ul,ur)\ar@{-}[l]_>(2,3){v_{\ell}} \ar@{-}[rr]^>{v_{2}}&&*{\bullet}&\stackrel{\sigma_2}{\longleftarrow}&
*{\bullet}\ar@{-}[r]^>{u_{\ell +1}}&*{\bullet} \ar@{-}@/^/[rrr]^>(0.3){h_3}^>(1.8){h_4}\ar@{-}[l]_>{v_{\ell}}&&&*{\bullet}\ar@{-}[lll]_<{u_1}^>(0.5){e_{\ell +1}} \ar@{-}[r]^>{v_{2}}&*{\bullet}
\\
\\
&*{\bullet}\ar@{-}[r]_>(0.5){h_1}&*{\bullet}\ar@{-}[l]_>{} \ar@{-}[r]_>(0.6){h_2}&*{\bullet}&&&
&*{\bullet}\ar@{-}[ruu]^>(0.6){h_1}&&*{\bullet}\ar@{-}[luu]_>(0.6){h_2}&&&
&*{\bullet}\ar@{-}[uu]^>(0.7){h_1}&&&*{\bullet}\ar@{-}[uu]_>(0.7){h_2}
}$$
\caption{Increasing the length of $\Delta$ in the proof of \ref{poly}.}
\end{figure}

\newpage
\begin{lemma}
\label{noloops}
Every $p$-regular hamiltonian   graph
is  linked   to a  $p$-hamiltonian graph.

Every $p$-regular hamiltonian  3-edge-connected  graph
is    3-linked  to a  $p$-hamiltonian  3-edge-connected graph.
\end{lemma}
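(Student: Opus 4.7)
The plan is to induct on the number of loops of $\Gamma$. First, observe that for $p=3$ the lemma is vacuous: at any vertex $v$ a hamiltonian cycle consumes two of the three half-edges, leaving only one, which cannot be part of a loop (a loop needs two half-edges at $v$). Thus a $3$-regular hamiltonian graph is automatically loop-free and is already a $3$-hamiltonian graph, so the sequence of length one gives both (3-)linkage statements.

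Assume therefore $p \ge 4$ and that $\Gamma$ has at least one loop $\ell$, based at a vertex $v$. Let $C$ be a hamiltonian cycle of $\Gamma$, let $e$ be one of the two edges of $C$ incident to $v$, and let $u$ be the other endpoint of $e$. I would produce a $p$-regular hamiltonian graph $\Gamma_2$ strongly linked to $\Gamma$ that has strictly fewer loops than $\Gamma$, and that is $3$-edge-connected if $\Gamma$ is so. Iterating this reduction exhausts the loops and proves both assertions of the lemma.

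The construction goes as in the proof of Proposition~\ref{poly}. I would contract $e$ to obtain $\Gamma':=\Gamma/e$, with image vertex $w:=\sigma_1(e)$ of valency $2p-2$ still carrying the loop $\ell$. Then I would perform a valency-reducing extension at $w$: split $w$ into two $p$-valent vertices $u_1,v_1$ joined by a new edge $e_{\ell+1}$, and distribute the $2p-2$ half-edges at $w$ subject to three constraints. \emph{(i)} The two halves of $\ell$ are separated, one to $u_1$ and one to $v_1$, so $\ell$ becomes a non-loop edge of $\Gamma_2$. \emph{(ii)} The two $C$-halves at $w$ (the images of the edges of $C$ at $v$ and at $u$ distinct from $e$) go, respectively, to $u_1$ and $v_1$, so that replacing $e$ by $e_{\ell+1}$ in $C$ yields a hamiltonian cycle of $\Gamma_2$. \emph{(iii)} For every edge of $\Gamma$ parallel to $e$, its two halves are split between $u_1$ and $v_1$, so that no such edge degenerates into a new loop. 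A short valency count, using that $v,u$ have valency $p$ in $\Gamma$, shows these constraints are mutually compatible and that both $u_1,v_1$ become $p$-valent. By construction, $\Gamma$ and $\Gamma_2$ are strongly linked via $\Gamma'$, $\Gamma_2$ is $p$-regular and hamiltonian, and its loop-count is exactly one less than that of $\Gamma$.

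For the $3$-edge-connected statement, the contraction step preserves $3$-edge-connectivity by Remark~\ref{lme}(2), so $\Gamma'$ is $3$-edge-connected. The main obstacle I expect is the extension step: one must verify that the valency-reducing extension can be carried out, subject to the forced choices \emph{(i)}--\emph{(iii)}, in such a way that no $2$-edge cut is created in $\Gamma_2$. This is a constrained version of the argument from \cite[Prop.~A.2.4]{CV}, already invoked at the end of the proof of Proposition~\ref{poly}. I would adapt that argument by a local count around $w$, showing that the forced splits absorb only a bounded number of degrees of freedom in the distribution of half-edges, so that enough freedom remains to route the assignment away from every potential small bond.
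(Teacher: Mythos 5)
Your construction is essentially the paper's. The paper also resolves a loop at $v_1$ by working across the hamiltonian edge $e_1$ joining $v_1$ to $v_2$: it turns the loop into an edge $f_1$ from $v_1$ to $v_2$ and compensates by moving to $v_1$ a half-edge $h$ of $v_2$ lying neither on $\Delta$ nor on an edge touching $v_1$, so that $\Gamma/e_1=\Gamma_2/e_1$; your contract-and-re-expand description is the same operation phrased as a valency-reducing extension, your compatibility count for the constraints (i)--(iii) is fine, and your observation that the $p=3$ case is vacuous is correct (though not needed, since the argument is uniform in $p$).

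The one step you leave genuinely unproved is the $3$-edge-connectivity of $\Gamma_2$, and the strategy you sketch (``enough freedom remains to route the assignment away from every potential small bond'') is both vaguer and heavier than what is needed: no freedom has to be exploited at all. Since $\Gamma_2/e_{\ell+1}=\Gamma/e$ is $3$-edge-connected, any separating pair of edges of $\Gamma_2$ must contain $e_{\ell+1}$. But your constraint (i) forces the de-looped edge $\ell$ to join $u_1$ and $v_1$, hence to be parallel to $e_{\ell+1}$; so the bigon $\{e_{\ell+1},\ell\}$ and the new hamiltonian cycle are two cycles of $\Gamma_2$ whose only common edge is $e_{\ell+1}$. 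By the criterion the paper isolates as Lemma~\ref{crit}(\ref{crit2}) (or directly from the fact, cited there from \cite{CV}, that a pair of edges is separating if and only if the two edges lie on exactly the same cycles), $e_{\ell+1}$ belongs to no separating pair, so $\Gamma_2$ is $3$-edge-connected for \emph{every} distribution of half-edges satisfying (i)--(iii); no adaptation of \cite[Prop.~A.2.4]{CV} is required. The same bigon argument also justifies the paper's version of the step. With that substitution your proof is complete.
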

\begin{proof}
It suffices to exhibit a procedure which decreases the number of loops, preserving the property
of being hamiltonian, $p$-regular and 3-edge-connected.

Let $\Delta\subset \Gamma$ be a fixed hamiltonian cycle;
denote by $E(\Delta)=\{e_1,\ldots, e_t\}$ and $V(\Gamma)=\{v_1,\ldots, v_t\}$
with $e_i$ joining $v_i$ and $v_{i+1}$ as usual.
Suppose that $\Gamma$ contains a loop $\ell$, and assume (with no  loss of generality)
that this loop is based at $v_1$. Let $\ell_1$ and $\ell_2$ be the half-edges of the loop
(so $\ell_1$ and $\ell_2$ touch $v_1$). We know that $v_1$ is connected to $v_2$ by the edge $e_1$. 
Since $v_2$ has valency $p\geq 3$ there is a half-edge $h$ touching $v_2$, not contained in 
the hamiltonian cycle $\Delta$, and not contained in an edge touching $v_1$
(for otherwise $v_2$ would have valency less than that of $v_1$).
Let us consider $\Gamma /e_1$, and call $w$ the vertex into which 
$e_1$ is contracted. The valency of $w$ is $2p-2$.

Let $\Gamma_2$ be the graph obtained from $\Gamma$ by 
changing the loop $\ell$ into an edge, called $f_1$, joining $v_1$ with $v_2$,
and by changing the half-edge $h$ into a half-edge touching $v_1$.
This operation does not create any new loop (as the edge of $\Gamma$ containing $h$ does not touch $v_1$), and eliminates the loop $\ell$. So the number of loops of $\Gamma_2$ is less than that of $\Gamma$.
It is clear that $\Gamma_2$ is $p$-regular (we added and removed a half-edge from $v_1$ and $v_2$, and left everything else unchanged).
The hamiltonian cycle $\Delta$ is  clearly contained in $\Gamma_2$, so $\Gamma_2$
is hamiltonian.
Finally, $\Gamma/e_1=\Gamma_2/e_1$, so $\Gamma$ and $\Gamma_2$ are strongly linked.

It remains to show that if $\Gamma$ is 3-edge-connected so is  $\Gamma_2$.
This follows from \cite[Prop.A.2.4]{CV}, in fact
 the extension of $w$ given by $\Gamma_2\to \Gamma/e_1$  is the same as in Step 1 in the proof of that proposition   (with obvious modifications).
\end{proof}

\subsection{$p$-polygons}

\begin{nota}
\label{norm}
{\it Fixing a hamiltonian cycle in a   $p$-hamiltonian graph.}
We now introduce some useful conventions.
Let $\Gamma$ be a $p$-hamiltonian graph, with $b:=b_1(\Gamma)$. 
We fix a hamiltonian cycle, $\Delta$, and refer to it as the {\it distinguished} hamiltonian cycle; let $\gamma=|V(\Gamma)|$ be the length of $\Delta$.
The choice of $\Delta$ enables us to use the   following terminology.
The     edges of $\Gamma$ which do not lie in $\Delta$ 
will be called   {\emph{chords}}. 
The number of chords of $\Gamma$ is easily computed:
\begin{equation}
\label{chords}
{\text{Number of chords of }} \Gamma = |E(\Gamma)| - \gamma=\frac{p(b-1)}{p-2}-
\frac{2(b-1)}{p-2}=b-1.
\end{equation}

The vertices of $\Gamma$ will be labeled according to the cyclic structure of $\Delta$, i.e.
$V(\Gamma )=V(\Delta )=\{v_1,v_2,\ldots, v_{\gamma}\}$
so that there exists an edge $e_i\in E(\Delta)\subset E(\Gamma)$ joining $v_i$ with $v_{i+1}$ for every $i=1,\ldots, \gamma$
(with the cyclic convention $v_{\gamma+1}=v_1$); hence $E(\Delta)=\{e_1,\ldots, e_{\gamma}\}$.
The starting vertex $v_1$ can be picked arbitrarily;
furthermore, for any choice of $v_1$, there are two cyclic labelings of the vertices
(corresponding to the two cyclic orientations of $\Delta$).
Once a distinguished cycle $\Delta$ is chosen, we shall always use  such a labeling.

%\begin{nota}\label{diag}
%{\it Chords of a hamiltonian graph in normalized form.}
Let $\Gamma$ be a $p$-hamiltonian graph where a distinguished cycle $\Delta$ has been fixed.
Every chord has two distinct endpoints  ($\Gamma$ being free from loops).
We shall denote by $d_{i,j}$ a chord joining $v_i$ with $v_j$, and always assume $i<j$.
If $p\geq 4$ there may be more than one chord joining $v_i$ with $v_j$;
if we need to distinguish between them we will use superscripts, i.e. we denote
$\{d_{i,j}^{\alpha},\  \alpha=1,\ldots, m\}$ the chords  joining $v_i$ and $v_j$; notice that  $m\leq p-2$.

We also need a notation for a chord of which  only one end is known.
So, the chord having one end at the vertex $v_j$ and the other end at some other vertex
will be denoted $d_{j,*}$.

Let $d_{i,j}$ be a chord as above. 
Then $d_{i,j}$  determines two paths of the cycle $\Delta$, namely the two paths $\Lambda$ and $\Lambda '$ contained in $\Delta$,
having extremes $v_i$ and $v_j$. Hence $\Lambda\cap \Lambda ' =\{v_i,v_j\}$
and $\Lambda\cup \Lambda ' =\Delta$.
We call such two paths the {\it sides} of $d_{i,j}$.
It is obvious that one of them 
has length $j-i$
 and the other has length $\gamma-j+i$.
We define the  {\it amplitude},  $\alpha(d_{i,j})$,  of $d_{i,j}$ as
the minimum between these two lengths: 
\begin{equation}
\label{amp}
\alpha(d_{i,j}):=\min \{j-i, \gamma-j+i\}.
\end{equation}
It is clear that $\alpha(d_{i,j})$ does not depend on the choice of the  labeling.
 \end{nota}

\begin{lemmadefi}
\label{trivia}
Let $\Gamma$ be a $p$-hamiltonian graph with a distinguished hamiltonian cycle.
Set $\gamma:=|V(\Gamma)|  =(2b_1(\Gamma)-2)/({p-2}) $.
\begin{enumerate}
\item
For any    chord $d_{i,j}$ we have
 $1\leq \alpha(d_{i,j})\leq \gamma/2$.
If  $\alpha(d_{i,j})\leq \gamma/2-1$ we say that $d_{i,j}$ is   
 {\emph{short}}.
\item
 Let $d_{i,j}$ be a short chord.
The side of $d_{i,j}$ having length $\alpha(d_{i,j})$ will be called the {\emph{short side}}
of $d_{i,j}$.
\item
%Suppose that $\gamma$ is even.
If $\alpha(d_{i,j})=\lfloor \gamma/2\rfloor$ for every   chord,
or equivalently, if $\Gamma$ has no short chords, then
  $\Gamma$ is uniquely determined,
  it will be  denoted by $\Pi_{\gamma}^p$ and will be called  the {\emph{$p$-polygon}}
  with $\gamma$ vertices
  (see Figures 3 and 4).
  
  If $\gamma$ is even the graph $\Pi_{\gamma}^p$ has
   $p-2$ chords between $v_i$ and $v_{i+\gamma/2}$  for every  
  $i=1,\ldots,\gamma/2$,   and no other  chord.

  If $\gamma$ is odd then $p$ is even.  
  For every  
  $i=1,\ldots,(\gamma-1)/2$,  the graph $\Pi_{\gamma}^p$
  has $(p-2)/2$ chords between $v_i$ and $v_{i+(\gamma-1)/2}$,  
  $(p-2)/2$ chords between $v_i$ and $v_{i+(\gamma+1)/2}$,
  and no other  chord.

\end{enumerate}
 \end{lemmadefi}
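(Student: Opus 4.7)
Part (1) is a direct consequence of the definition of $\alpha$: writing $\alpha(d_{i,j})=\min\{j-i,\gamma-j+i\}$, the two quantities are positive (since $1\leq i<j\leq \gamma$) and their sum is $\gamma$, so the minimum is at least $1$ and at most $\gamma/2$. Part (2) is purely definitional. So the content of the lemma is part (3), namely the uniqueness of $\Gamma$ when every chord has amplitude $\lfloor \gamma/2\rfloor$, together with the assertion that $\gamma$ odd forces $p$ even. The plan is to do the even and odd cases separately and, in each one, use the $p$-regularity of $\Gamma$ at each vertex to pin down the multiplicity of chords in each admissible position.

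Assume $\gamma=2k$ is even. Then $\lfloor\gamma/2\rfloor=\gamma/2$, so $\alpha(d_{i,j})=k$ forces the two sides of $d_{i,j}$ to have equal length, hence $j-i=k$; equivalently every chord joins some $v_i$ to $v_{i+k}$, indices read modulo $\gamma$. Since $v_{i+k}=v_{i-k}$, every chord incident to $v_i$ must have its other endpoint at $v_{i+k}$. The vertex $v_i$ is incident to the two edges $e_{i-1},e_i$ of $\Delta$, so of its $p$ half-edges exactly $p-2$ belong to chords, all of which end at $v_{i+k}$. This forces exactly $p-2$ chords between $v_i$ and $v_{i+k}$ for every $i=1,\ldots,\gamma/2$, determining $\Gamma$ uniquely. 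The total count $(\gamma/2)(p-2)=b-1$ is consistent with equation~(\ref{chords}).

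Now assume $\gamma=2k+1$ is odd, so $\lfloor\gamma/2\rfloor=k$. The condition $\alpha(d_{i,j})=k$ forces $\{j-i,\gamma-j+i\}=\{k,k+1\}$, so every chord joins $v_i$ either to $v_{i+k}$ or to $v_{i+k+1}$ (indices modulo $\gamma$). For each $i\in\mathbb{Z}/\gamma$ let
\[
c_i := \#\{\text{chords between } v_i \text{ and } v_{i+k}\}.
\]
A chord from $v_i$ to $v_{i+k+1}$ is the same, viewed from $v_{i+k+1}$, as a chord to $v_{(i+k+1)+k}=v_i$, hence contributes to $c_{i+k+1}$. Therefore the $p-2$ chord-ends at $v_i$ split as
\[
p-2 \;=\; c_i + c_{i+k+1}
\]
for every $i\in\mathbb{Z}/\gamma$. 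Iterating gives $c_{i+2(k+1)}=c_i$, so the sequence $(c_i)$ has period dividing $2(k+1)=2k+2$ in $\mathbb{Z}/\gamma$. Since $\gcd(2k+2,2k+1)=1$, the sequence is constant, say $c_i\equiv c$. Then $2c=p-2$, which forces $p$ to be even and $c=(p-2)/2$. The graph is thereby uniquely determined: $(p-2)/2$ chords between $v_i$ and $v_{i+k}$ and between $v_i$ and $v_{i+k+1}$ for each $i$, as claimed.

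The main subtlety is in the odd case: extracting both the constancy of the chord-multiplicity and the parity constraint on $p$ from the single cyclic relation $c_i+c_{i+k+1}=p-2$. The key point is the coprimality $\gcd(2k+2,2k+1)=1$, which I expect to be the only slightly non-obvious step; everything else is bookkeeping with the cyclic labeling.
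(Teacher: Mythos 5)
Your proof is correct. Parts (1), (2) and the even case of (3) follow the paper's argument essentially verbatim: positivity of the amplitude from the absence of loops, the upper bound from the two side-lengths summing to $\gamma$, and $p$-regularity forcing exactly $p-2$ chords from $v_i$ to $v_{i+\gamma/2}$. The odd case is where you diverge, and your route is cleaner. The paper first deduces that $p$ is even from the handshake count $|E(\Gamma)|=p\gamma/2$ (an integer with $\gamma$ odd), sets $r=(p-2)/2$, and then argues by contradiction: assuming more than $r$ chords of one of the two admissible types at $v_1$, it propagates the resulting inequalities vertex by vertex around the cycle until it reaches a vertex incident to fewer than $2r=p-2$ chords. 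You instead encode the same local information as the exact relation $c_i+c_{i+k+1}=p-2$ on $\mathbb{Z}/\gamma$ and observe that the shift by $k+1$ generates $\mathbb{Z}/(2k+1)$ (your coprimality $\gcd(2k+2,2k+1)=1$, or more directly $2k+2\equiv 1 \pmod{2k+1}$), so the sequence $(c_i)$ is constant; this yields the constancy $c=(p-2)/2$ and the parity of $p$ in one stroke, with no separate appeal to the edge count and no proof by contradiction. Both arguments propagate the same vertex-local constraint around the odd cycle; yours is exact where the paper's is inequality-based.
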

 \begin{proof} 
Since $\Gamma$ has no loops we have,  for any    chord $d_{i,j}$,
  $1\leq \alpha(d_{i,j})$. If $\gamma$ is even (respectively, odd) the maximal amplitude of a chord is obviously $\gamma/2$ (respectively, $(\gamma-1)/2$ ). 
  
  Now let $\gamma$ be even. If there are no short chords, every chord is of type
  $d_{i, i+\gamma/2}$ for $i=1,\ldots ,\gamma/2$. Moreover, every pair of vertices $v_i$, $v_{i+\gamma/2}$
  is joined by exactly $p-2$ chords, because $\Gamma$ is $p$-regular.
  This shows that $\Gamma$ is uniquely determined.

Now suppose that   $\gamma$ is odd, and that $\Gamma$ has no short chord.
Then every chord is either of type  $d_{i, i+(\gamma-1)/2}$  or of type  $d_{i, i+(\gamma+1)/2}$.
Since $|E(\Gamma)|=p\gamma/2$ we have that  $p$ is even;   set $r=(p-2)/2$. For every vertex there are $2r$ chords touching it.

We claim that there are exactly $r$ chords of type $d_{i, i+(\gamma-1)/2}$
and  $r$ chords of type $d_{i, i+(\gamma+1)/2}$ for every $i=1,\ldots, (\gamma-1)/2$.
By contradiction, suppose (with no loss of generality) that there are more than $r$ chords joining $v_1$ with $v_{(\gamma+1)/2}$; hence there are less than $r$ chords joining $v_1$ with $v_{(\gamma+3)/2}$.
But then there are  more than $r$ chords joining $v_2$ with $v_{(\gamma+3)/2}$
and less than $r$ chords joining $v_2$ with $v_{(\gamma+5)/2}$. Continuing in this way we get that
there are less than $r$ chords joining $v_{(\gamma-1)/2}$ with $v_{\gamma}$.
The remaining chords touching $v_{\gamma}$ are the ones touching also $v_{(\gamma+1)/2}$;
since there are already more than $r$ chords of type $d_{1, (\gamma+1)/2}$,
there can only be less than $r$ chords of type $d_{\gamma, (\gamma+1)/2}$. We conclude that there are less than $2r$ chords touching $v_\gamma$. A contradiction.
This shows that $\Gamma$ is uniquely determined.  
    \end{proof}
 \begin{example}
 If $p=3$ we have $\gamma =2b_1(\Gamma)-2$ and $\Pi_{\gamma}^3$ has no multiple edge.
 \begin{figure}[!htp]
\label{Pi34}
$$\xymatrix@=1pc{
&&*{\bullet}\ar@{-}[dd]_>{v_3}^<{v_1} \ar@{-}[dl]_>{v_4} \ar@{-}[dr]^>{v_2}  &&&&
&&*{\bullet} \ar@{-}@/_/[ddr]\ar@{-}[dl]_>{v_6} &*{\bullet}\ar@{-}@/^/[ddl]\ar@{-}[l]_>{v_1}  \ar@{-}[dr]^>{v_3}^<{v_2}
\\
\Pi_4^3=&*{\bullet} \ar@{-}[rr] |!{[r]}\hole & &*{\bullet} &&&
\Pi_6^4=&*{\bullet} \ar@{-}@/_/[rrr]\ar@{-}[rrr]  & &&*{\bullet} 
 \\
&&*{\bullet}\ar@{-}[ru]\ar@{-}[lu]&&&&
&&*{\bullet}\ar@{-}[ruu]\ar@{-}[lu]^<{v_5}&*{\bullet}\ar@{-}[l]  \ar@{-}[ur] _<{v_4}\ar@{-}[luu]
}$$
 \caption{Some $p$-polygons with even number of vertices.}
\end{figure}
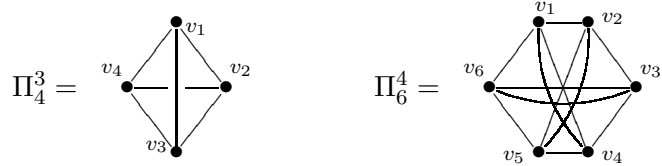

If $\gamma$ is odd, then $\Pi_{\gamma}^p$ has no multiple edges if and only if $p\leq 4$.
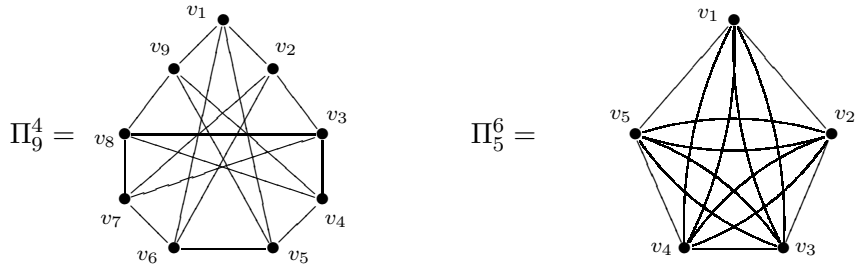
\begin{figure}[!htp]
$$\xymatrix@=1pc{
&&&*{\bullet} \ar@{-}[ddddl] \ar@{-}[ddddr]\ar@{-}@[dddl]&&&&&&&&&
*{\bullet}\ar@{-}[ddll]_>{v_5}_<{v_1} \ar@{-}@/^/[ddddl] \ar@{-}@/_/[ddddl] \ar@{-}@/_/[ddddr]\ar@{-}@/_/[ddddl]\ar@{-}@/^/[ddddr]\ar@{-}[ddrr]^>{v_2}\\
 &&*{\bullet}\ar@{-}[ur]^>{v_1}^<{v_9} \ar@{-}[ddrrr] \ar@{-}[dl]& &*{\bullet}\ar@{-}[ddlll]\ar@{-}[dddll]  \ar@{-}[ul]_<{v_2}\ar@{-}[dr] ^>{v_3}\\
\Pi_9^4  =&*{\bullet}\ar@{-}[drrrr] \ar@{-}[rrrr]  & &&&*{\bullet} \ar@{-}[dllll] &&&
\Pi_5^6  =&&*{\bullet}\ar@{-}@/^/[ddrrr] \ar@{-}@/_/[ddrrr] \ar@{-}@/^/[rrrr] \ar@{-}@/_/[rrrr] & &&&*{\bullet} \ar@{-}@/_/[ddlll] 
\ar@{-}@/^/[ddlll]\\
 &*{\bullet}\ar@{-}[u] ^>{v_8} \ar@{-}[dr] _<{v_7}_>{v_6}&&& &*{\bullet}\ar@{-}[u]\ar@{-}[dl]^<{v_4}^>{v_5} \\
&&*{\bullet}&&*{\bullet}\ar@{-}[ll] \ar@{-}[uuull] &&&&&&&
*{\bullet}\ar@{-}[luu]^<{v_4}&&*{\bullet}\ar@{-}[ll] \ar@{-}[uur] _<{v_3} 
}$$
\caption{Some $p$-polygons with an odd number of vertices.}
\end{figure}
\end{example}
We need a criterion for 3-edge-connectivity.
\begin{lemma}
\label{crit}
\begin{enumerate}
\item
\label{crit1}
Let $\Gamma$ be a graph such that for every edge $e$ there exist two distinct  cycles
$\Delta_1$ and $\Delta_2$ in $\Gamma$ and such that $E(\Delta_1)\cap E(\Delta_2)=\{e\}$. Then $\Gamma$ is 3-edge-connected.
\item
\label{crit2}
Let $\Gamma_1$ be a 3-edge-connected graph  and let $\Gamma_2$ be a graph strongly linked to $\Gamma_1$,
so that $\Gamma_1/e_1=\Gamma_2/e_2$ with $e_i\in E(\Gamma_i)$ (notation in Def.~\ref{link}).
Then $\Gamma_2$ is 3-edge-connected if it contains   two cycles
$\Delta_1\neq \Delta_2$   such that $E(\Delta_1)\cap E(\Delta_2)=\{e_2\}$.
\item
\label{crit3}
The $p$-polygon $\Pi_{\gamma}^p$ is 3-edge-connected for every $p\geq 3$.
\end{enumerate}
\end{lemma}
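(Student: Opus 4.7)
The plan is to deduce part (\ref{crit3}) from part (\ref{crit1}) by exhibiting, for each edge of $\Pi_\gamma^p$, the two cycles required by (\ref{crit1}); part (\ref{crit2}) is handled by a case analysis of the same flavour. For (\ref{crit1}), I would argue by contradiction: suppose an edge cut $F\subseteq E(\Gamma)$ with $|F|\leq 2$ disconnects $\Gamma$ into $V_1\sqcup V_2$. Connectedness rules out $|F|=0$, and $|F|=1$ is impossible because the unique cut edge would be a bridge, whereas by hypothesis every edge lies in a cycle. If $F=\{e,f\}$, let $u\in V_1$ and $v\in V_2$ be the endpoints of $e$; the two cycles $\Delta_1,\Delta_2$ through $e$ with $E(\Delta_1)\cap E(\Delta_2)=\{e\}$ yield, after removal of $e$, two edge-disjoint $u$--$v$ paths in $\Gamma\setminus e$. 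Each such path must cross from $V_1$ to $V_2$, but the only non-$e$ cut edge is $f$, so both paths contain $f$, contradicting their edge-disjointness.

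For (\ref{crit2}), I would again suppose a cut $F$ of size at most $2$ in $\Gamma_2$ and split into cases according to whether $e_2\in F$. If $e_2\notin F$, the endpoints of $e_2$ lie on the same side of the partition, so $F$ descends to a cut of the same size in $\Gamma_2/e_2=\Gamma_1/e_1$; this contradicts the 3-edge-connectedness of $\Gamma_1/e_1$, which follows from Remark~\ref{lme}(2). If $e_2\in F$ and $|F|=1$, then $e_2$ would be a bridge of $\Gamma_2$, contradicting $e_2\in E(\Delta_1)$. In the remaining case $F=\{e_2,e\}$, the argument of (\ref{crit1}) applied to the prescribed $\Delta_1,\Delta_2$ and the edge $e$ yields the desired contradiction.

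For (\ref{crit3}) the plan is to verify the hypothesis of (\ref{crit1}) for every edge of $\Pi_\gamma^p$. If $e$ is a chord $d_{i,j}$, the two sides of $e$ in the distinguished hamiltonian cycle $\Delta$ both have positive length (by Lemma~\ref{trivia}(1)) and share only the vertices $v_i,v_j$, so appending $e$ to each produces two cycles whose edge sets meet exactly in $\{e\}$. If $e=e_k$ lies on $\Delta$, the rotational symmetry of $\Pi_\gamma^p$ reduces to $k=1$; I would then pick a chord $d$ at $v_1$ and a different chord $d'$ at $v_2$, both of maximal amplitude, forming cycles $C_1,C_2$ through $e_1$ whose arcs on $\Delta$ cover disjoint halves of the hamiltonian cycle. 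Explicitly, for $\gamma\geq 4$ even I would take $C_1=\{e_1,e_2,\ldots,e_{\gamma/2},d_{1,1+\gamma/2}\}$ and $C_2=\{e_1,d_{2,2+\gamma/2},e_{2+\gamma/2},\ldots,e_\gamma\}$, and an analogous choice using amplitudes $(\gamma\pm 1)/2$ works for $\gamma\geq 5$ odd.

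The main obstacle in (\ref{crit3}) lies in the degenerate cases $\gamma\in\{2,3\}$, where $\Delta$ is too short for the generic construction. For $\gamma=2$, any two of the $p-1\geq 2$ edges between $v_1$ and $v_2$ distinct from $e_1$ form with $e_1$ two length-two cycles meeting only in $\{e_1\}$. For $\gamma=3$ (where $p\geq 4$ is even, so $(p-2)/2\geq 1$), one cycle is $e_1$ together with a second edge between its endpoints, while the other is the triangle $\{e_1,e_2,e_3\}$; again the intersection is $\{e_1\}$. Once these cases are disposed of, verifying $E(C_1)\cap E(C_2)=\{e_1\}$ in the generic case reduces to the disjointness of the cycle-edge index sets $\{2,\ldots,\gamma/2\}$ and $\{2+\gamma/2,\ldots,\gamma\}$ (and their odd-$\gamma$ analogues), together with the distinctness $d\neq d'$, both of which are immediate from the description of $\Pi_\gamma^p$ in Lemma~\ref{trivia}.
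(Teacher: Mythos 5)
Your proposal is correct and follows the same overall architecture as the paper: part (\ref{crit3}) is deduced from part (\ref{crit1}) by exhibiting, for each chord, the two cycles formed by its two sides, and for each edge of the hamiltonian cycle, two cycles built from maximal-amplitude chords at $v_1$ and $v_2$ (your explicit cycles in the even case coincide with the paper's); and part (\ref{crit2}) is handled exactly as in the paper, by using Remark~\ref{lme} to see that $\Gamma_1/e_1=\Gamma_2/e_2$ is 3-edge-connected, so that any small cut of $\Gamma_2$ must contain $e_2$, and then invoking the hypothesis on $\Delta_1,\Delta_2$. The one genuine difference is in part (\ref{crit1}): the paper cites \cite[Lemma 2.3.2]{CV}, which characterizes separating pairs as pairs of edges lying in exactly the same cycles, whereas you give a self-contained argument via two edge-disjoint paths across the cut; your version is more elementary and avoids the external reference, at the cost of having to be slightly careful that the cut is taken minimal so that both cut edges actually straddle the partition (and note that in the last case of part (\ref{crit2}) the path argument should be run on $e_2$, not on the other cut edge $e$). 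Your separate treatment of $\gamma\in\{2,3\}$ is also a point in your favour: the paper's explicit cycle $\Delta_2=(e_1,e_\gamma,\ldots,e_{\gamma/2+2},d_{2,\gamma/2+2})$ degenerates for $\gamma=2$ (where $d_{2,\gamma/2+2}$ would be the same chord used in $\Delta_1$ when $p=3$), a case the paper does not address but which your ad hoc two-cycles dispose of correctly.
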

\begin{proof}
For   part (\ref{crit1}), we notice that $\Gamma$ has no separating edges
(a separating edge is not contained in any cycle).
Suppose by contradiction that $\Gamma$ is not 3-edge-connected;
let $(e,e')$ be a separating pair of edges of $\Gamma$. By \cite[Lemma 2.3.2 (iv) and (iii)]{CV},
$(e,e')$ is a separating pair if and only if $e$ and $e'$ belong to the same cycles of $\Gamma$.
By our assumption, this is clearly impossible.

Now part (\ref{crit2}). The graph $\Gamma_1/e_1=\Gamma_2/e_2$ is 3-edge-connected as $\Gamma_1$ is.
Therefore any separating pair of edges of $\Gamma_2$ must contain $e_2$. The proof of part (\ref{crit1}) shows that
our hypothesis implies that $e_2$ is not contained in any separating pair of edges, hence we are done.

To prove part (\ref{crit3}) we use again part (\ref{crit1}). 
Pick a chord
 $d_{i,j}$; then there obviously exist two cycles having only $d_{i,j}$  as common edge:
 just take the two cycles obtained by adding to  $d_{i,j}$ one of its two sides
 (terminology in subsection~\ref{norm}).
 To prove that we can apply (\ref{crit1}) on the remaining edges we need to distinguish two cases, according to the parity of $\gamma$.
 
Suppose $\gamma$ even.
By Lemma~\ref{trivia} in  $\Pi_{\gamma}^p$
 there exists at least one chord  $d_{i,i+\gamma/2} $ joining $v_i$ with $v_{i+\gamma/2}$, 
for every $i=1,\ldots, \gamma/2$.   
Pick an edge which is not a chord, $e=e_1$.
Now $\Pi_{\gamma}^p$ contains the chords $d_{1,\gamma/2+1}$ and $d_{2,\gamma/2+2}$.
Then
$\Delta_1=(e_1,\ldots, e_{\gamma/2}, d_{1,\gamma/2+1})$ and 
$\Delta_2=(e_1, e_{\gamma}, \ldots, e_{\gamma/2+2},d_{2,\gamma/2+2})$
are two cycles having only $e$  as common edge.
Therefore  $\Pi_{\gamma}^p$ is 3-edge-connected.

Now suppose that $\gamma$ is odd; again we use  (\ref{crit1}). 
By Lemma~\ref{trivia} in  $\Pi_{\gamma}^p$
 there exists at least one chord    joining $v_i$ with $v_{i+(\gamma-1)/2}$, 
 and at least one chord joining $v_i$ with $v_{i+(\gamma+1)/2}$.   
Let $e=e_1$ be  an edge which is not a chord. Let
$\Delta_1=(e_1, d_{2,(\gamma+3)/2}, d_{1,(\gamma+3)/2})$ and 
$\Delta_2=(e_1, e_2,\ldots, e_{(\gamma-1)/2},d_{1,(\gamma+1)/2})$; these are two cycles whose only edge in common is $e_1$.
Hence $\Pi_{\gamma}^p$ is 3-edge-connected.
\end{proof}
  We say that   two chords $d_{i,j}$ and $d_{k,l}$  do not cross  if
 $i<j<k<l$.
\begin{lemma}
\label{short}
Let $\Gamma$ be a $p$-hamiltonian graph with a distinguished hamiltonian cycle(cf. \ref{norm}).
  Let $d_{i,j}$ be a short chord.
Then there exists a short chord $d_{k,l}$   with $j<k$ (i.e. $d_{i,j}$ and $d_{k,l}$  do not cross)
and such that the short side of $d_{i,j}$
does not intersect the short side of $d_{k,l}$.
 \end{lemma}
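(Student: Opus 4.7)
I will recast the conclusion in the following equivalent form: it suffices to exhibit a chord $d_{k,l}$ of $\Gamma$ with $j<k<l$ and $l-k<\gamma/2$. Any such chord is automatically short, since $\alpha(d_{k,l})=l-k<\gamma/2$, and its short side is $\{v_k,v_{k+1},\ldots,v_l\}\subseteq\{v_{j+1},\ldots,v_\gamma\}$, which is disjoint from the short side $\{v_i,\ldots,v_j\}$ of $d_{i,j}$. By cyclically relabeling the distinguished hamiltonian cycle (shifting the starting vertex $v_1$, and possibly reversing orientation), I may assume at the outset that $i=1$ and $j=\alpha+1\le\lfloor\gamma/2\rfloor$, so that the short side of $d_{i,j}$ is the non-wrapping arc $\{v_1,\ldots,v_j\}$. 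Put $T:=\{v_{j+1},\ldots,v_\gamma\}$.

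The plan is a counting argument by contradiction: suppose no chord with endpoints in $T$ has $l-k<\gamma/2$, so that every chord with both endpoints in $T$ satisfies $l-k\ge\lceil\gamma/2\rceil$. I then pinpoint a set of \emph{middle vertices} $M\subseteq T$, namely those $v_m\in T$ admitting no partner $v_n\in T$ with $|n-m|\ge\lceil\gamma/2\rceil$. A direct index computation, splitting by the parity of $\gamma$, gives
\[
M=\{v_{\gamma/2+1},\ldots,v_{\gamma/2+j}\}\ \ (\gamma\text{ even}),\qquad M=\{v_{(\gamma+1)/2},\ldots,v_{(\gamma+1)/2+j}\}\ \ (\gamma\text{ odd}),
\]
so $|M|=j$ in the first case and $|M|=j+1$ in the second; the bound $j\le\lfloor\gamma/2\rfloor$ guarantees in both cases that $M\subseteq T$.

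Under the contradiction hypothesis, every one of the $p-2$ chord endpoints at a middle vertex $v_m\in M$ must point across $d_{i,j}$ to a vertex of $\{v_1,\ldots,v_j\}$, so $M$ contributes at least $|M|(p-2)$ crossing chord endpoints on the $T$-side. On the short-side, however, the vertices $v_1,\ldots,v_j$ host together only $j(p-2)$ chord endpoints, and two of these are occupied by $d_{i,j}$ itself; thus the number of crossing chord endpoints on the $\{v_1,\ldots,v_j\}$-side is at most $j(p-2)-2$. Matching the two tallies yields $|M|(p-2)\le j(p-2)-2$, which reduces to $0\le -2$ when $\gamma$ is even and to $p-2\le -2$ when $\gamma$ is odd, both impossible for $p\ge3$. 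This contradiction produces the desired chord.

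The delicate part of the proof is the parity-sensitive identification of the middle set $M$ and the verification that it fits inside $T$; once this is set up, the capacity inequality at $\{v_1,\ldots,v_j\}$, and in particular the crucial $-2$ contributed by the two endpoints of the chord $d_{i,j}$ itself, forces the contradiction essentially for free.
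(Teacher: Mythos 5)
Your proof is correct and rests on the same underlying mechanism as the paper's --- a proof by contradiction that plays the chord-capacity $j(p-2)$ of the short side of $d_{1,j}$ against the chords forced to cross it --- but the bookkeeping is genuinely different. The paper bounds from below the number $|D|$ of chords avoiding $\{v_1,\dots,v_j\}$ (its ``$+1$'' correction comes from the same source as your ``$-2$'': the chord $d_{1,j}$ itself occupies two endpoint slots on its own short side) and then derives the contradiction from a rather tersely justified lower bound on the length of the path $\Lambda$ from $v_{j+1}$ to $v_\gamma$. You instead isolate the set $M$ of middle vertices, which under the contradiction hypothesis must send all $p-2$ of their chord half-edges across to $\{v_1,\dots,v_j\}$; your identification of $M$ and of $|M|$ in both parities is correct (no chord can join two vertices of $M$, so the $|M|(p-2)$ half-edges really do belong to distinct crossing chords), and the resulting inequality $|M|(p-2)\le j(p-2)-2$ is immediate and, to my mind, more transparent than the paper's path-length estimate. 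One caveat, which you share with the paper: your opening reduction claims that $l-k<\gamma/2$ makes $d_{k,l}$ short, but for $\gamma$ odd ``short'' means $\alpha\le\gamma/2-1$, i.e.\ $\alpha\le\lfloor\gamma/2\rfloor-1$, whereas $l-k<\gamma/2$ only yields $l-k\le\lfloor\gamma/2\rfloor$; correspondingly your contradiction hypothesis $l-k\ge\lceil\gamma/2\rceil$ (and the paper's identical hypothesis $l-k\ge\gamma/2$) is strictly stronger than the negation of shortness when $\gamma$ is odd. So in the odd case both arguments produce a non-crossing chord of amplitude at most $\lfloor\gamma/2\rfloor$ rather than one of amplitude at most $\lfloor\gamma/2\rfloor-1$ as the statement requires; this borderline is inherited from, not introduced by, your argument, and does not affect the even case $\gamma=2b-2$ relevant to $p=3$.
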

\begin{proof}
We denote by $\Delta$ the fixed hamiltonian cycle.
We may assume that $i=1$, so that the given chord $d_{1,j}$ has $j\leq \gamma/2$
 (i.e. the short side of $d_{1,j}$ has vertices $v_1, v_2,\ldots, v_j$).
 We must prove that there exists a short chord $d_{k,l}$
 such that
 
 (a) $j<k$  ($d_{1,j}$ and  $d_{k,l}$ do not  cross).
 
 (b) $l-k< \lfloor\gamma/2\rfloor$ (the short side of $d_{k,l}$ has vertices $v_k, v_{k+1},\ldots, v_{l-1},v_l$).
 
 Let us denote by $D$ the set of chords satisfying (a);
 we begin by bounding $|D|$ from below.
Consider the $j$ vertices   $v_1, v_2,\ldots, v_j$; 
 there are at most $p-2$ chords touching  each of them. Therefore
the total number of distinct chords touching these vertices is at most
  $j(p-2)-1$  (to explain the ``$-1$" notice that the chord $d_{1,j}$ joins $v_1$ with $v_j$, hence it 
  must not be counted twice).
Since $\Gamma$ has  $b-1$ chords, we get
\begin{equation}
\label{D}
|D|\geq b-1 - j(p-2)+1=b-  j(p-2).
\end{equation}
 In particular, since $b=1+(p-2)\gamma/2$ and 
 $j\leq \gamma/2$, we have that $|D|\geq 1$, i.e.
  $D$ is not empty. To prove that there exists at least one chord in $D$ satisfying (b) we argue by contradiction.
Suppose that every chord    $d_{k,l} \in D$ satisfies $ l-k\geq  \gamma/2$. This is to say that the path
$\Lambda\subset \Delta$ 
from $v_{j+1}$ to $v_{\gamma}$  
contains a side of length at least $\gamma/2$ for every non multiple chord in $D$.
Let us restrict our attention to the subgraph 
$ 
\Gamma '=\Gamma \smallsetminus \{v_1,\ldots, v_j\},
$  
obtained   by removing the vertices $\{v_1,\ldots, v_j\}$ and all edges adjacent to them.
So, $\Gamma '$ is made of $\Lambda$ together with every chord in $D$.
Now,   two vertices of $\Lambda$ are joined  by a chord of $D$ only if
they are separated by at least $\gamma/2$ edges. 
Moreover, every two vertices can be joined by at most $p-2$ chords.
Therefore   the length of the path $\Lambda$
satisfies, using  \eqref{D},
$$
\text{length}(\Lambda)\geq 
\frac{\gamma}{2}+\frac{|D|}{p-2} -1 \geq  
\frac{b-1}{p-2}+\frac{b}{p-2} -j-1=
\frac{2b-1}{p-2}  -j-1>\gamma-j-1
$$
(since $\gamma=\frac{2b-2}{p-2}$). On the other hand $\Lambda$ is a path 
from $v_{j+1}$ to $v_{\gamma}$, whose 
  length is easily computed:
  $$
\text{length }(\Lambda)=\gamma-(j+1)=\gamma-j-1,
$$
which is in contradiction with the above estimate on $\text{length}(\Lambda)$.
\end{proof}

\subsection{Proof of the linkage theorem}
\label{linksec}
\begin{nota}
{\it Twisting pairs of chords in a $p$-hamiltonian graph.}
Let $\Gamma$ be a  $p$-hamiltonian graph with a distinguished hamiltonian cycle,
as in \ref{norm}; pick  two chords $d_{i,j}$ and $d_{k,l}$. 
In this subsection we momentarily suspend the general convention $i<j$ and $k<l$ (which would be too restrictive).
We introduce the graph $\Gamma'$ obtained from $\Gamma$ by 
swapping two endpoints of the above chords. So, $\Gamma'$ is  obtained from $\Gamma$ by
replacing 
the chord $d_{i,j}$ with a new chord, $d_{i,k}$, joining $v_i$ and $v_k$,
and by
replacing 
  $d_{k,l}$ with a chord  $d_{j,l}$. 
  Everything else is left unchanged.
  We shall say that $\Gamma '$ is a {\it twist} of $\Gamma$, and that 
 $\Gamma'$ is obtained from $\Gamma$ 
by {\it twisting} the pair of chords $(d_{i,j},d_{k,l})$ into the pair $(d_{i,k},d_{j,l})$.
We shall also say that we swapped the end points $v_j$ and $v_l$.

With no loss of generality we may set  $i=1$; the graph $\Gamma '$ is obviously a  $p$-regular hamiltonian graph;
a distinguished hamiltonian cycle will be   naturally induced by the one  of $\Gamma$.
So the vertices of $\Gamma$ and $\Gamma '$ will have the same names,
 and all the edges of $\Gamma$ other than $d_{1,j}$ and $d_{k,l}$
 correspond to edges of $\Gamma'$ other than $d_{1,k}$ and $d_{j,l}$.

The picture below represents two  3-hamiltonian graphs  related by twisting a pair of chords
(the  dotted chords are the ones that are not changed).

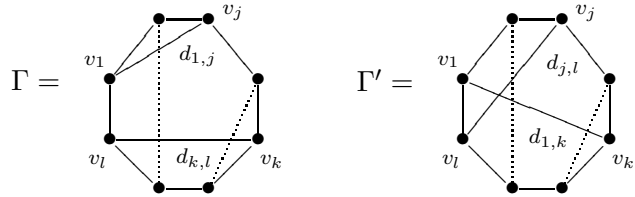
\begin{figure}[!htp]
\label{twist}
$$\xymatrix@=1pc{
&&*{\bullet}\ar@{..}[ddd]\ar@{-}[dl]_>{v_1} &*{\bullet}\ar@{-}[l]\ar@{-}[dr]^<{v_j}&&& 
&&*{\bullet}\ar@{..}[ddd]\ar@{-}[dl]_>{v_1} &*{\bullet}\ar@{-}[l]\ar@{-}[dr]^<{v_j}
\\
\Gamma= &*{\bullet} \ar@{-}[d]  \ar@{-}[urr] _<(.7){d_{1,j}}& &&*{\bullet} \ar@{-}[d]&& \Gamma'= 
&*{\bullet} \ar@{-}[d] & &&*{\bullet} \ar@{-}[d]
 \\
&*{\bullet}  && &*{\bullet} \ar@{-}[lll]^<(.4){d_{k,l}}&& 
  &*{\bullet}  \ar@{-}[uurr]_>(.8){d_{j,l}}  && &*{\bullet} \ar@{-}[lllu]^<(.3){d_{1,k}}
\\
&&*{\bullet}\ar@{-}[lu]^>{v_l}&*{\bullet}\ar@{-}[l]  \ar@{-}[ur]_>{v_k}\ar@{..}[uur]&&& 
&&*{\bullet}\ar@{-}[lu]^>{v_l}&*{\bullet}\ar@{-}[l]  \ar@{-}[ur]_>{v_k}\ar@{..}[uur]
}$$

\caption{Twisting  $ d_{1,j}$ and $d_{k,l}$ into  $ d_{1,k}$ and $d_{j,l}$.}
\end{figure}
\end{nota}
The following technical lemma is used in the proof of Theorem~\ref{main}.
\begin{lemma}
\label{linktwist}
Let $\Gamma$ be a $p$-hamiltonian graph with a distinguished hamiltonian cycle.
\begin{enumerate}
\item
If $\Gamma '$ is a twist of $\Gamma$, then $\Gamma$ and $\Gamma '$ are linked.
\item
Let $\Gamma$ be 3-edge-connected   and fix a chord $d_{i,j}$ of $\Gamma$, with $i<j$.
Let $d_{j+1,*}$ be a chord of $\Gamma$ starting at the vertex $v_{j+1}$; suppose that either (a) or (b) below
hold.
\begin{enumerate}[(a)]
\item
$d_{j+1,*}=d_{j+1,h}$ with $j+1<h$ (i.e. $d_{j+1,*}$ does not cross $d_{i,j}$).
\item
$d_{j+1,*}=d_{h,j+1}$ with $i<h<j$ and
there exists a third chord $d_{x,y}$ such that
% $i<x<j<y$ and $h<x<j+1<y$.
$1\leq i<h<x<j<j+1<y$.
\end{enumerate}
Then the graph  obtained by twisting ($d_{i,j},d_{j+1,*}$) into
 ($d_{i,j+1},d_{j,*}$) is 3-edge-connected and strongly linked to $\Gamma$.
\end{enumerate}
  \end{lemma}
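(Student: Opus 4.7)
The plan is to realize every twist of a $p$-hamiltonian graph as a strong linkage obtained by contracting a hamiltonian edge, and to obtain general twists by composing such elementary moves. The elementary construction simultaneously handles the strong-linkage part of (2) and the adjacent case of (1); the inductive composition then yields the full generality of (1), while a case-by-case cycle exhibition combined with Lemma~\ref{crit}(\ref{crit2}) yields the 3-edge-connectivity of (2).

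For the key construction, observe that in part (2) the two involved chords have endpoints at the adjacent hamiltonian vertices $v_j$ and $v_{j+1}$. Contracting the hamiltonian edge $e_j$ produces a graph $\Gamma/e_j$ in which $v_j,v_{j+1}$ merge into a single vertex $w$ of valency $2p-2$, losing the information of which chord endpoint was attached to which. Applying the same contraction to $\Gamma'$ gives the same graph: $\Gamma'/e_j = \Gamma/e_j$, since $\Gamma'$ differs from $\Gamma$ exactly by the redistribution of two chord endpoints between $v_j$ and $v_{j+1}$, which is wiped out by the contraction. Hence $\Gamma$ and $\Gamma'$ are strongly linked per Definition~\ref{link}. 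This takes care of the strong-linkage statement in (2). The same construction, applied to any adjacent twist (one exchanging endpoints at $v_j$ and $v_{j+1}$), yields a strong linkage. For the general twist in (1), exchanging endpoints at $v_j$ and $v_k$ with $j<k$, I would argue by induction on $k-j$: at each intermediate vertex $v_m$ with $j<m<k$ there is at least one chord endpoint (since $p\geq 3$), so a swap of that endpoint with one at $v_{m+1}$ is an adjacent twist, and the transposition $(j,k)$ factors into adjacent transpositions
\[
(j,k) = (j,j+1)(j+1,j+2)\cdots(k-1,k)(k-2,k-1)\cdots(j,j+1).
\]
Each factor corresponds to a strong linkage, and concatenating them establishes that $\Gamma$ and $\Gamma'$ are linked.

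The main obstacle is the 3-edge-connectivity assertion in part (2). Since $\Gamma$ is 3-edge-connected, so is $\Gamma/e_j$ by Remark~\ref{lme}, and Lemma~\ref{crit}(\ref{crit2}) reduces the task to exhibiting two cycles $\Delta_1,\Delta_2$ in $\Gamma'$ whose only common edge is $e_j$. In case (a), with $d_{j+1,*}=d_{j+1,h}$ where $h>j+1$ and new chords $d_{i,j+1}, d_{j,h}$, I take
\[
\Delta_1 = (e_i,e_{i+1},\ldots,e_j,d_{i,j+1}), \qquad \Delta_2 = (e_j,e_{j+1},\ldots,e_{h-1},d_{j,h});
\]
these are simple cycles both containing $e_j$, their hamiltonian portions meet only in $e_j$, and the chords are distinct.

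In case (b), the new chords $d_{i,j+1}$ and $d_{h,j}$ straddle $e_j$, so the direct construction of case (a) fails; I use the auxiliary chord $d_{x,y}$ provided by hypothesis to build
\[
\Delta_1 = (e_j,d_{i,j+1},e_i,e_{i+1},\ldots,e_{h-1},d_{h,j}),
\]
\[
\Delta_2 = (e_j,e_{j+1},\ldots,e_{y-1},d_{x,y},e_x,e_{x+1},\ldots,e_{j-1}).
\]
The chain of inequalities $1\leq i<h<x<j<j+1<y$ ensures that the three hamiltonian arcs $[e_i,e_{h-1}]$, $[e_{j+1},e_{y-1}]$, $[e_x,e_{j-1}]$ are pairwise disjoint and that $d_{i,j+1}$, $d_{h,j}$, $d_{x,y}$ are three distinct chords, so $\Delta_1\cap\Delta_2=\{e_j\}$. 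An application of Lemma~\ref{crit}(\ref{crit2}) then finishes the proof.
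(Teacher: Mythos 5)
Your proposal is correct and follows essentially the same route as the paper: the adjacent swap is realized as a strong linkage by contracting the hamiltonian edge $e_j$, the general twist of part (1) is factored into adjacent swaps (your explicit palindromic product of transpositions is exactly the unrolled form of the paper's induction on $k-j$), and the two cycles you exhibit in cases (a) and (b) are the same ones the paper feeds into Lemma~\ref{crit}~(\ref{crit2}).
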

\begin{proof}
We can assume $i=1$ so that $d_{i,j}=d_{1,j}$. The edges of the distinguished hamiltonian cycle $\Delta$  will be called, as usual, $e_1, e_2,\ldots, e_{2b-2}$
with $e_i$ joining $v_i$ with $v_{i+1}$.
Let $\Gamma'$ be a twist of $\Gamma$.
We prove $\Gamma'$  is linked to $\Gamma$
by induction on $k-j$
(i.e. on the distance along   $\Delta$ of the two   swapped vertices). If $k=j+1$ let $e$ be the edge of $\Delta$ between $v_j$ and $v_{j+1}$. Then the graph obtained from $\Gamma$ by contracting $e$ is the same as the graph
obtained from $\Gamma'$ by contracting $e$; hence $\Gamma$ and $\Gamma '$ are strongly linked.
Now assume $k-j\geq 2$. Let $\Gamma_1$ be the graph obtained from $\Gamma$ by twisting the chord $d_{1,j}$ with a chord ending at $v_{j+1}$, denoted $d_{j+1,*}$.
So, in $\Gamma_1$ we have the chords $d_{1,j+1}^1$ and $d_{j,*}^1$, where the superscript  keeps track of the graph   to which the chords belong. We already proved that $\Gamma$ and $\Gamma_1$ are linked. Now consider the graph $\Gamma_2$ obtained from 
$\Gamma_1$ by twisting $d_{1,j+1}^1$ and $d_{k,l}^1$, replacing them with
$d_{1,k}^2$ and $d_{j+1,l}^2$; since $k-(j+1)<k-j$, by induction $\Gamma_1$ and $\Gamma_2$ are 
linked. Finally, let $\Gamma_3$ be  
obtained from 
$\Gamma_2$ by twisting $d_{j+1,l}^2$ and $d_{j,*}^2$, replacing them with
$d_{j,l}^3$ and $d_{j+1,*}^3$. Again by induction
(we are swapping $v_j$ and $v_{j+1}$) $\Gamma_3$ is linked to $\Gamma _2$, and therefore $\Gamma_3$ is linked to $\Gamma$. It is obvious that $\Gamma_3=\Gamma'$.

Let us prove the second part.
Consider $e_j$, the edge between $v_j$ and $v_{j+1}$
(which, abusing notation as usual, is an edge of both $\Gamma$ and $\Gamma'$).
It is easy to check  that $\Gamma $ and $\Gamma'$ are strongly linked,
as the graphs $\overline{\Gamma}$ and  $\overline{\Gamma'}$
obtained from  $\Gamma $ and $\Gamma'$ by contracting $e_j$ are obviously isomorphic.
We need to prove that if either (a) or (b) holds, then $\Gamma '$ is  3-edge-connected if $\Gamma$ is.
By Lemma~\ref{crit} (\ref{crit2}), it is enough to show that
the edge $e_j$   belongs to two distinct cycles $\Delta_1$ and $\Delta _2$ of $\Gamma '$, such that $E(\Delta_1)\cap E(\Delta_2)=\{e_j\}.$

Suppose (a) holds, so 
we are twisting $(d_{1,j},d_{j+1,h})$ into $(d_{1,j+1},d_{j,h})$, with $h> j+1$.
Then in $\Gamma '$ we have the cycles $\Delta_1$ and $\Delta_2$ whose edge sets are
$$
E(\Delta_1)=\{ e_j,d_{1,j+1},e_1,\ldots, e_{j-1}  \}
$$
and 
$$
E(\Delta_2)=\{e_j, e_{j+1},\ldots,e_{h-1}, d_{j,h} \}.
$$
 It is clear 
that $\Delta_1$ and $\Delta_2$ are cycles and that $E(\Delta_1)\cap E(\Delta_2)=\{e_j\}$.

Now assume (b). 
We are twisting $(d_{1,j},d_{h,j+1})$ into $(d_{1,j+1},d_{h,j})$.
 Let $d_{x,y}$ be a chord crossing both $d_{h,j}$ and $d_{1,j}$.
We   have
$$
1<h<x<j <j+1<y.
$$
Now the edges of the two cycles containing $e_j$ and sharing no other edge  are
$$
E(\Delta_1)=\{ e_j,d_{1,j+1},e_1,\ldots, e_{h-1}, d_{h,j}  \}
$$
and
$$
E(\Delta_2)=\{ e_j,e_{j+1},\ldots, e_{y-1},d_{x,y},e_{x}, e_{x+1},\ldots, e_{j-1}  \}.
$$
Since $1<h<x$ we have $E(\Delta_1)\cap E(\Delta_2)=\{e_j\}$.
\end{proof}

%\subsection{}
We are ready to prove the linkage theorem.

\begin{thm}
\label{main}
Let $\Gamma_1$ and $\Gamma _2$ be    $p$-regular  graphs  with
$ b_1(\Gamma_1)=b_1(\Gamma_2)$.

Then $\Gamma_1$ and $\Gamma _2$ are linked.

If $\Gamma_1$ and $\Gamma _2$ are 3-edge-connected, then they are 3-linked.
\end{thm}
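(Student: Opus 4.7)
The plan is to reduce to $p$-hamiltonian graphs and then to a single canonical target, namely the $p$-polygon $\Pi_{\gamma}^p$, using the twisting machinery developed in the previous subsections. By Proposition~\ref{poly}, every $p$-regular graph of genus $b$ is linked to a $p$-hamiltonian graph of the same genus, and 3-linked to a 3-edge-connected $p$-hamiltonian graph in the 3-edge-connected case. Since (3-)linkage is an equivalence relation, it thus suffices to prove: every $p$-hamiltonian graph $\Gamma$ is linked to $\Pi_{\gamma}^p$, with $\gamma=(2b-2)/(p-2)$, with 3-linkage under the 3-edge-connectivity hypothesis.

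To carry this out I would fix a distinguished hamiltonian cycle in $\Gamma$ as in~\ref{norm} and introduce a ``polygon-defect''
$$
\Phi(\Gamma) \;=\; \sum_{d \text{ chord of } \Gamma} \bigl(\lfloor \gamma/2\rfloor - \alpha(d)\bigr),
$$
which by Lemma-Definition~\ref{trivia}(3) vanishes precisely on $\Pi_{\gamma}^p$. The induction is on $\Phi$: if $\Phi(\Gamma)>0$ there exists a short chord, and Lemma~\ref{short} produces a second short chord non-crossing with the first and with disjoint short side. Applying Lemma~\ref{linktwist}(1) to twist this pair yields a $p$-hamiltonian graph linked to $\Gamma$ in which, as a direct amplitude computation shows (the disjointness of the short sides forces the two new amplitudes to each be at least $\lfloor\gamma/2\rfloor$, or at least to sum strictly more than the old pair), $\Phi$ has strictly decreased. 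Iterating drives us to $\Pi_{\gamma}^p$.

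For the 3-edge-connected case the same inductive scheme applies, but each twist must be realized through Lemma~\ref{linktwist}(2), whose hypothesis restricts to ``adjacent'' swaps of the form $(d_{i,j},d_{j+1,*})$ and in the crossing subcase (b) requires an auxiliary chord $d_{x,y}$ in a prescribed crossing position. An arbitrary twist of two short chords must therefore be decomposed into a telescoping sequence of adjacent swaps, sliding one endpoint one step at a time along the distinguished cycle, in the style of the induction already used in the proof of Lemma~\ref{linktwist}(1); at every intermediate stage one verifies that either case (a) or case (b) of that lemma applies, and that the resulting intermediate graph is still 3-edge-connected.

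The main obstacle, and the technical heart of the 3-edge-connected statement, will be producing the auxiliary chord $d_{x,y}$ whenever case (b) is required. This amounts to a lower bound on the number of chords crossing any prescribed arc of the hamiltonian cycle: too few crossings would expose a separating pair of edges on the cycle bounding the arc, contradicting 3-edge-connectivity via Lemma~\ref{crit}. Combined with the chord count \eqref{chords} and an argument in the same spirit as Lemma~\ref{short}, this should yield the required $d_{x,y}$ at each step, concluding the induction and the proof.
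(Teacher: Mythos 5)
Your reduction to $p$-hamiltonian graphs via Proposition~\ref{poly}, the choice of $\Pi_{\gamma}^p$ as the canonical target, the defect $\Phi=\sum\bigl(\lfloor\gamma/2\rfloor-\alpha(d)\bigr)$, and the induction on it using the pair of short chords supplied by Lemma~\ref{short} are exactly the paper's argument for the first statement. One imprecision: your parenthetical claim that disjointness of the short sides forces the two new amplitudes to each be at least $\lfloor\gamma/2\rfloor$ is false (twisting two short chords that are close together produces two new chords that are still short); what one actually proves, after normalizing the pair so that $j-1\le l-k$ and $k-j\le \gamma+1-l$, is that the sum of amplitudes increases by at least $2$, which is the hedge you also state and is enough.

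The genuine gap is in the 3-edge-connected part, precisely at the point you yourself flag as ``the main obstacle''. You propose to produce the auxiliary chord $d_{x,y}$ required by case (b) of Lemma~\ref{linktwist} via a counting argument from 3-edge-connectivity. But 3-edge-connectivity only guarantees, for each pair of cycle edges, at least one chord crossing the corresponding cut, and that chord may be one of the two chords being twisted or may violate the strict inequalities $i<h<x<j<j+1<y$; so a chord in the required position does not follow from connectivity alone. Worse, with an arbitrary pair of short chords the intermediate chords $d_{g,*}$ with $j<g<k$ can point backwards into $\{v_1,\dots,v_j\}$, so your telescoping sweep would need case (b) at many steps where no suitable $d_{x,y}$ is in sight. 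The paper resolves this not by counting but by choosing the pair $(d_{1,j},d_{k,l})$ with $k-j$ minimal among admissible pairs: this minimality forces every intermediate chord $d_{g,*}$ ($j+1\le g\le k-1$) to end at some $v_m$ with $m\ge k$ (equation \eqref{midchord}), so the first sweep of adjacent swaps always falls under the non-crossing case (a), and in the second sweep the chord $d_{1,k}$ created by the first sweep itself serves as the auxiliary $d_{x,y}$ for case (b), with the one degenerate case $m=k$ handled by exhibiting the two required cycles directly. Without this minimality device, or a correct replacement for it, your induction does not close.
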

\begin{proof}
By Proposition~\ref{poly}   we can assume that $\Gamma_1$ and $\Gamma _2$ are $p$-hamiltonian.

We shall prove the theorem by showing that every $p$-hamiltonian graph  $\Gamma$ is linked to
the $p$-polygon    $\Pi_\gamma^p$, where  $\gamma=\frac{2b-2}{p-2}$ and $b=b_1(\Gamma)$.
Moreover, if $\Gamma $ is 3-edge-connected, we will prove that it is 3-linked to $\Pi_\gamma^p$,
which is 3-edge-connected by Lemma~\ref{crit}.

Let us fix a distinguished hamiltonian cycle $\Delta$  of $\Gamma$   and use the notation of \ref{norm}.  Now 
set
$$
\epsilon(\Gamma):=\sum  \Bigl(\lfloor \gamma/2\rfloor-\alpha(d_{i,j}) \Bigr)
$$
where the sum is over all the   chords of $\Gamma$.
By Lemma~\ref{trivia} we have $\epsilon(\Gamma)\geq 0$, and 
 $\epsilon(\Gamma)=0$ if and only if  $\Gamma$ has no short chord, if and only if
$\Gamma=\Pi_\gamma^p$.

We will prove the theorem by induction on $\epsilon(\Gamma)$. By what we just observed,
if $\epsilon(\Gamma)=0$ there is nothing to prove, so the induction basis is settled.

Assume now  that $\epsilon(\Gamma)>0$ and let us pick a short chord;
  we may call it $d_{1,j}$  and   assume that $j\leq \gamma/2$.
By  Lemma~\ref{short} we have that
there exist chords $d_{k,l}$
satisfying  
\begin{equation}
\label{kl}
1<j<k<l\  \    \  \  \text{ and }\  \   \  \   l-k<\lfloor  \gamma/2\rfloor.\  \  
\end{equation}
We can assume (up to  changing the labeling of the vertices) that there exists one of them such that 
the path (in $\Delta$) from $v_j$ to $v_k$ is not longer than the path from $v_l$ to $v_1$; i.e. we can assume that
\begin{equation}
 \label{shift} 
k-j\leq \gamma+1-l.
\end{equation}
We shall pick the pair $(d_{1,j}, d_{k,l})$ such that $k-j$,
i.e. the length of the path in $\Delta$ from $v_j$ to $v_k$,
is   minimal  with respect to all pairs satisfying  \eqref{kl} and \eqref{shift};
we shall refer to this as the ``minimality property" of $(d_{1,j}, d_{k,l})$. 
%This is easy to achieve:  fix any pair $(d_{1,j}, d_{k,l})$ satisfying (\ref{kl}) and (\ref{shift}).
 %If there exists a short chord $d_{i, j'}$
%with $1\leq i<j'$ and $j<j'$ such that the short side of $d_{i, j'}$ does not intersect the short side of  
%$d_{k,l}$, then (as obviously $k-j'<k-j$) we replace $d_{1,j}$ with $d_{i, j'}$
%(so that $v_{i}$ becomes the new $v_1$, which is no problem).
%In repeating this  process we can minimize $k-j$ by varying $j$.
 %Next   we minimize $k-j$ by repeating the same reasoning with respect to $d_{k,l}$.
% So, if there exists a short chord $d_{k',l'}$ with $k'<l'<l$ and $k'<k$ whose
% short side does not intersect the short side of  $d_{1,j}$, we replace $d_{k,l}$ by $d_{k',l'}$.
 
 Now that we have fixed our two chords, we can assume, up to switching them and changing the labeling on the vertices,
 that 
 \begin{equation}\label{mini}
j-1= \alpha(d_{1,j})\leq \alpha(d_{k,l})=l-k.
 \end{equation}
 With these settings, we have
  \begin{equation}
  \label{kb}
 k\leq   \gamma/2 +1,\    {\text{  more exactly }}\   k\leq\lfloor  \gamma/2\rfloor+1.
 \end{equation}
Let us prove (\ref{kb}) by contradiction. Suppose  $k\geq\lfloor  \gamma/2\rfloor+2$. Then    (\ref{mini}) implies
$$
\lfloor  \gamma/2\rfloor+2\leq k\leq l-j+1.
$$
Now, by (\ref{shift}), we have
 $ 
 l-j+1\leq \gamma+2-k. 
 $ 
 Therefore 
 $ 
\lfloor  \gamma/2\rfloor+2\leq \gamma+2-k$ and hence $ k\leq \lfloor  \gamma/2\rfloor+1$;
 a contradiction.
 
\begin{claim}
\label{claim}
Let $\Gamma '$ be the graph obtained from $\Gamma$ by
twisting the pair of chords $(d_{1,j}, d_{k,l})$ into the pair 
 $(d_{1,k}, d_{j,l})$.
Then  $\epsilon (\Gamma ')<\epsilon (\Gamma)$.
 \end{claim}
 To prove  the claim,
consider a chord $d'$ of $\Gamma '$.
For notational clarity, we will denote by $d'_{*,*}$ the chords of $\Gamma '$.
If $d'$ is not equal to $d'_{1,k}$ or $d'_{j,l}$, then $d'$ corresponds to a unique chord $d$ of $\Gamma $ such that $\alpha(d)=\alpha(d')$.
Therefore we have
\begin{equation}
\label{diff}
\epsilon(\Gamma)-\epsilon(\Gamma')=-\alpha(d_{1,j})-\alpha(d_{k,l})+\alpha(d'_{1,k})+\alpha(d'_{j,l}).
\end{equation}
We know that
$\alpha(d_{1,j})= j-1$ and $\alpha(d_{k,l})=l-k$, by construction and by  \eqref{kl}.
Furthermore, by (\ref{kb}) we have $\alpha(d'_{1,k})= k-1$ .

To compute the remaining term we need to distinguish two cases.

\noindent
  Case 1:  $l-j\leq\gamma/2$.
Then
$\alpha(d'_{j,l})=l-j$.
Therefore 
$$
\epsilon(\Gamma)-\epsilon(\Gamma')=1-j+k-l+k-1+l-j=2k-2j\geq 2
$$
by  \eqref{kl}. So the claim is proved in this case.

\noindent
Case 2:  $l-j\geq \gamma/2+1$.
Now
 $\alpha(d'_{j,l})=\gamma+j-l$.
 Therefore
$$
\epsilon(\Gamma)-\epsilon(\Gamma')=1-j+k-l+k-1+\gamma+j-l=\gamma+2(k- l)\geq 2
$$
as $l-k<\lfloor  \gamma/2\rfloor$ by (\ref{kl}). The claim is proved.

\

 Lemma~\ref{linktwist} says that $\Gamma$ and $\Gamma'$ are linked.
By the claim we may apply induction, getting that  $\Gamma'$ is linked to $\Pi_{\gamma}^p$; hence
 the first part of the Theorem is proved.

 Before continuing,
 we analyze the chords having one end at a vertex $v_g$, with
 $j+1\leq g\leq k-1$. Let $d_{g,*}$ be one such chord.
 We claim that with our choice of the pair $(d_{1,j}, d_{k,l})$, we have
\begin{equation}
\label{midchord}
d_{g,*}=d_{g,m},\  \  \   m\geq k.
\end{equation}
By contradiction, suppose $m< k$.
If $m< g$ we have (as $g\leq k-1$ and $m\geq 1$)
$$
g-m\leq k-1-1\leq \gamma/2-1
$$ 
by (\ref{kb}).
Therefore $d_{m,g}$ satisfies the properties satisfied by $d_{1,j}$: it is a short chord
whose short side does not intersect the short side  of $d_{k,l}$,
and it verifies (\ref{shift}),
i.e. the path from $v_g$ to $v_k$  is not shorter than the path from
$v_l$ to $v_m$. Now,
 the path from $v_g$ to $v_k$ is obviously shorter than the path from $v_j$ to 
$v_k$, contradicting the minimality property of $(d_{1,j}, d_{k,l})$. 

Suppose now that $g< m< k$.
Again, $d_{g,m}$ satisfies  (\ref{shift}) and $v_m$ is closer to $v_k$ than $v_j$. Therefore,
in order  to respect the minimality property of $(d_{1,j}, d_{k,l})$, we must have
$m-g\geq \gamma/2$. This  implies ($m\leq k-1\leq \gamma/2$ by (\ref{kb}) and $g\geq j+1$)
$$
\gamma/2\leq m-g \leq \gamma/2 -j-1
$$
 which is obviously impossible. \eqref{midchord} is proved.

To finish the proof of  the theorem,
it is enough to  show that, if $\Gamma$ is 3-edge-connected, then $\Gamma '$ is 3-edge-connected  and
 3-linked to $\Gamma $.
 To do that  we shall factor  the twist of $(d_{1,j}, d_{k,l})$ into  $(d_{1,k}, d_{j,l})$
by
 a series of twists swapping consecutive vertices, each of which preserves   3-edge-connectivity.
 We do that with two sets of twists. 
To define   the first set, we make a choice of a  chord   $d_{h+1,*}$ for every $j\leq h\leq k-1$.
 This choice will be irrelevant.

\

{\it (I.1)} \  \  \  \  \  Twist  $(d_{1,j}, d_{j+1,*})$ \  into  $(d_{1,j+1}, d_{j,*}).$

{\it (I.2)}  \  \  \  \  \  Twist  $(d_{1,j+1}, d_{j+2,*})$ into  $(d_{1,j+2}, d_{j+1,*}).$

.........

{\it  (I.h+1-j)} Twist  $(d_{1,h}, d_{h+1,*})$ into  $(d_{1,h+1}, d_{h,*}),$ with $j\leq h\leq k-1$.
 
.........

{\it  (I.k-j)}  \  \  \  \ Twist  $(d_{1,k-1}, d_{k,l})$ into  $(d_{1,k}, d_{k-1,l})$
 
 \
 
Observe that in each of the above twists,  
  the two chords getting twisted, $d_{1, h}$ and $d_{h+1,*}$, do not cross,
  i.e. $d_{h+1,*}=d_{h+1,m}$ with $m>h+1$.
  This is obvious for the last step, $(I.k-j)$, as $1<k-1<k<l$.
For the remaining steps, for which $h\leq k-2$, we use
 \eqref{midchord}, according to which every $d_{h+1,*}$  is of type $d_{h+1,m}$
with $m\geq k$. Hence $1<h<h+1\leq k-1<m$, as claimed.
  
%Since    $d_{1, h}$ and $d_{h+1,*}$  do not cross  
Therefore condition (a) of  Lemma~\ref{linktwist} holds, and we 
 conclude that the graph  $\Gamma''$, obtained after the above set of twists, is 3-edge-connected
 and 3-linked to $\Gamma$.
 
 Notice that $\Gamma''$ contains  the chord $d_{1,k}$  and the chord
 $d_{k-1,l}$.
The second set of twists, starting from $\Gamma ''$ is the following.

{\it (II.1)}  \  \  \  \    Twist  $(d_{k-1,l}, d_{k-2,*})$ into  $(d_{k-2,l}, d_{k-1,*})$.

{\it (II.2)}  \  \  \   \  Twist  $(d_{k-2,l}, d_{k-3,*})$ into  $(d_{k-3,l}, d_{k-2,*})$.

.........
 
 {\it (II.k-h)}  \  \  Twist  $(d_{h,l}, d_{h-1,*})$ into  $(d_{h-1,l}, d_{h,*})$, where $j+1\leq h\leq k-1$.

.........

{\it (II.k-j-1)} Twist  $(d_{j+1,l}, d_{j,*})$ into  $(d_{j,l}, d_{j+1,*})$ 
 
 \
 
 where the chords $d_{h-1,*}$ are those chosen for   the first set of twists.
  Observe that the chord $d_{1,k}$ 
 (which lies in every graph appearing in   the above twists)
 crosses every chord $d_{h,l}$  with  $j+1\leq h\leq k-1$.
 % (since $1<h<k<l$).
 If $d_{1,k}$  crosses also $d_{h-1, *}$ Lemma~\ref{linktwist} applies
 to the step (II.k-h) above (condition (b) of Lemma~\ref{linktwist} holds), and hence 3-edge-connectivity is preserved
 (to fit in precisely with the notation of Lemma~\ref{linktwist}, one translates the starting vertex after $v_h$, 
 sets $h-1=j$ and $h=j+1$ so that $d_{h-1,*}$ becomes $d_{i,j}$ and $d_{h,l}$ becomes $d_{j+1,*}$.).

What if 
$d_{1,k}$  does not cross   $d_{h-1, *}$?
Recall that by \eqref{midchord} we have $d_{h-1, *}=d_{h-1,m}$ with $m\geq k$.
Therefore  $d_{1,k}$ does not cross $d_{h-1,m}$ only if  $k=m$.
Let us show that twisting $(d_{h,l},d_{h-1,k})$ 
into $(d_{h-1,l},d_{h,k})$ 
preserves 3-edge-connectivity; let 
$\widetilde{\Gamma}$ be the graph obtained after this twist.
By Lemma~\ref{crit}(\ref{crit2})
it suffices to show that $\widetilde{\Gamma}$ contains
  two cycles, $\Delta_1$ and $\Delta_2$,   whose only edge in common is
  the edge $e_{h-1}$ (joining the two swapped vertices $v_{h-1}$ and  $v_h$).
Here are the two cycles
  $$
  \Delta_1=(e_{h-1}, d_{h,k},d_{1,k}, e_1,\ldots, e_{h-2})
  $$ 
 and
 $$
  \Delta_2=(e_{h-1},  e_h,\ldots, e_{l-1},d_{h-1,l}).
  $$ 
 
 Therefore the graph $\Gamma'''$ obtained from $\Gamma$
by  our two sets of twists  is 3-edge-connected and 3-linked to $\Gamma$.
Let us check that  $\Gamma'''$ coincides with  the $\Gamma '$ of   Claim~\ref{claim}.
The chords $d_{1,j}$ and  $d_{k,l}$ of $\Gamma$ are twisted into  $d_{1,k}$ and  $d_{j,l}$
in $\Gamma'$ and $\Gamma'''$.
The remaining chords of $\Gamma$ and $\Gamma'$ are the same.
The chord
 $d_{h+1,m}\in E(\Gamma)$   with $j\leq h\leq k-2$, in the first set of twists,
 is changed into the chord $d_{h,m}\in E(\Gamma'')$, which is changed  back into $d_{h+1,m}\in E(\Gamma''')$
by the second set of twists. 
 All other chords of $\Gamma$ are not touched by our twists.
So $\Gamma'''=\Gamma '$ and we are done.
 \end{proof}

\section{Moduli of Tropical curves}
\label{tropsec}

\subsection{Tropical curves and tropical equivalence}
\label{list}
In this subsection we recall   several basic facts. 
%We refer to \cite{CHBK} for details and references.
The original definition of a tropical curve   can be given in terms of metric graphs,
by \cite{MIK3} or  \cite{MZ}. In the following, we use   a  terminology slightly different from the cited references.
Recall that our graphs are assumed connected.
\begin{enumerate}[$\bullet$]
%[{\bf(a)}]
\item
A {\it pure  tropical curve} is a pair $(\Gamma, \ell)$ where $\Gamma$ is a graph 
  and  $\ell$ is a {\it length} function on the edges 
$$ 
\ell:E(\Gamma)\to \R_{> 0}\cup \{\infty\}
$$ 
such that $\ell (e)=+\infty$ if and only if $e$ is   adjacent to a $1$-valent vertex.
The genus of $(\Gamma,\ell )$ is   $g(\Gamma,\ell )=b_1(\Gamma)$.

\item
More generally, following \cite{BMV}, a {\it (weighted) tropical curve} is a triple $(\Gamma, w, \ell)$
where $\Gamma$ is a graph, $w: V(\Gamma)\to \Z _{\geq 0}$ a {\it weight} function on the
vertices,
  and  $\ell$  a  length  function  
$$
\ell:E(\Gamma)\to \R_{> 0}\cup \{\infty\}
$$ 
such that $\ell (e)=+\infty$ if and only if $e$  is adjacent to a $1$-valent vertex of weight $0$.

The genus of $(\Gamma,w,\ell )$ is    defined as follows:
\begin{equation}
\label{gen}
g(\Gamma,w,\ell )=g(\Gamma, w)=b_1(\Gamma)+\sum_{v\in V(\Gamma)}w(v).
\end{equation}

By ``tropical curve" without attribute we shall mean a weighted tropical curve.
If  $w=\mo$, i.e. $w(v)=0$ for every $v\in V(\Gamma)$, the weighted tropical curve is pure.

   \item   Two tropical curves are {\it (tropically) equivalent} if they can be obtained from one  another
   by adding or removing
    2-valent vertices of weight 0,
   or
   1-valent vertices of weight 0, together with their adjacent edge.
   \end{enumerate}

\begin{nota}{\it Pointed tropical curves.}
Before giving more details, we want to extend our discussions to
curves with points on them,  
 so-called ``pointed tropical curves". 
 First, we introduce a generalized  notion of graphs, namely, graphs with legs. 
 Here is the combinatorial definition.
\begin{defi}
\label{gdef}
A    graph   $\Gamma$ with $n$ {\it legs} is the following set of data:
\begin{enumerate}
\item
A finite non-empty set $V (\Gamma)$, the set of {\it vertices}.
  \item
   A finite set $H(\Gamma)$, the set of {\it half-edges}.
  \item 
An involution 
   $ 
\iota: H(\Gamma)\to H(\Gamma)
%\  \  \  \  \  h\mapsto \overline{h} 
 $ 
 with  $n$ fixed points
  called  the  {\it legs} of $\Gamma$;   the set of legs is denoted by $L(\Gamma)$.

 A pair $e=\{h ,  \iota(h)\}$ of distinct elements   in $H(\Gamma)$   is called an {\it edge};
  the set of edges is denoted by $E(\Gamma)$.
   \item
A  map $\epsilon:H(\Gamma)\to V(\Gamma)$.
 \end{enumerate}

 If $\epsilon(h)=v$ we say that $h$  is adjacent to $v$, or that $v$ is its endpoint.
The {\it valency} of     $v\in V(\Gamma)$ is the number $|\epsilon^{-1}(v)|$ of half-edges adjacent to $v$.
We say that $\Gamma$ is $p$-regular if every $v\in V(\Gamma)$ has valency $p$.
%An edge adjacent to a vertex of valence $1$ is called a {\it leaf}.

%An edge whose endpoints coincide is called a {\it loop}.

%Two legs are called {\it disjoint} if their endpoints are distinct.
 \end{defi}
 
It is clear how to associate to the above combinatorial object a topological space. 
Namely
let $\Gamma$ be  a graph as defined above,  with vertex set $V$ and  edge set $E$.
The topological graph associated to it has  $V$ as the set of  $0$-cells; then we add a $1$-cell for every $e=\{ h , \iota(h) \} \in E$,
so that the boundary of this $1$-cell is $\{ \epsilon(h), \epsilon(\iota(h))\}$.
If $\Gamma$ has a non empty set of legs $L$, we add  
an open $1$-cell for every $h\in L$ in such a way that one extreme of the $1$-cell contains $\epsilon(h)$ in its closure.

Of course, if $L$ is empty we have the same graphs   treated in the previous section of the paper.
We shall henceforth view graphs with legs also as topological spaces, and we shall freely switch between the combinatorial and the topological viewpoint. 
As in the previous part of the paper, we shall assume that all our graphs are connected.

Now, a point of a tropical curve can be  efficiently represented by a leg of the corresponding graph.
Here is a list of basic definitions and properties, the first of which generalize those stated in Subsection~\ref{list};   see \cite{CHBK} for details and examples.
 \begin{enumerate}[{\bf(1)}]

\item
An {\it $n$-pointed tropical curve} is a triple $(\Gamma, w, \ell)$
where $\Gamma$ is a graph with $n$ legs, $w: V(\Gamma)\to \Z _{\geq 0}$ a {\it weight} function on the
vertices,
  and  $\ell$ is a {\it length} function  
$$
\ell:E(\Gamma)\cup L(\Gamma)\to \R_{> 0}\cup \{\infty\}
$$ 
such that $\ell (x)=+\infty$ if and only if either
$x\in L(\Gamma)$  or $x$ is an edge   adjacent to a $1$-valent vertex of weight $0$.

The legs of $\Gamma$ are the marked points of the curves.
The genus of $(\Gamma,w,\ell )$ is     $g(\Gamma,w,\ell )=g(\Gamma, w)$ as defined in \eqref{gen}.

\item 
As before, an $n$-pointed    tropical curve is called {\it pure} if $w=\mo$.

An   $n$-pointed    tropical curve is called {\it regular} if it is pure and if
its underlying graph $\Gamma$ is 
$3$-regular.

   \item
 The pair $(\Gamma, w)$   is called a {\it weighted graph} (with $n$ legs);   we say that $(\Gamma, w)$ is the  combinatorial type of the   curve  $(\Gamma, w,\ell)$.
 
   \item $(\Gamma, w)$ is called {\it stable} if every vertex of weight $0$ has valency at least $3$,
   and every vertex of weight $1$ has valency at least $1$
   (in other words, an isolated vertex is stable only if it has weight at least 2; see \cite[Ex. 2.4.7]{CHBK} for more on this point).

   Stable graphs of genus $g$ with $n$ legs exist if and only if $2g-2+n>0$.
   
   \item   Two $n$-pointed tropical curves are {\it (tropically) equivalent} if they can be obtained from one  another
   by adding or removing
   
   (a) 2-valent vertices of weight 0,
   
   or
   
   (b) 1-valent vertices of weight 0, together with their adjacent edge.
   
Tropical equivalence preserves the genus and the number of legs.

\item   Suppose $2g-2+n>0$. Every tropical equivalence class of $n$-pointed tropical curves   contains a unique representative whose combinatorial type is stable.

\item 
\label{iso}Two $n$-pointed tropical curves $(\Gamma_1,w_1,\ell_1)$ and $(\Gamma_2,w_2,\ell_2)$ are
{\it isomorphic} 
if there exists  a triple  $(\alpha_V,\alpha_E,\alpha_L)$, where 
 $\alpha_V:V(\Gamma_1)\to V(\Gamma_2)$,  $\alpha_E:E(\Gamma_1)\to E(\Gamma_2)$
 and    $\alpha_L:L(\Gamma_1)\to L(\Gamma_2)$
are bijections such that $\alpha_V$   maps  the endpoints of $x\in E(\Gamma_1)\cup L(\Gamma_1)$
to the endpoints of $\alpha_E(x)$ or  $\alpha_L(x)$ for every $x\in E(\Gamma_1)\cup L(\Gamma_1)$.
Moreover $\forall v\in V(\Gamma_1)$ and $\forall e\in E(\Gamma_1)$ we have
$ 
w_1(v)=w_2(\alpha_V(v))
$ 
and 
$ 
\ell_1(e)=\ell_2(\alpha_E(e)).
$

\item The automorphism group
$\Aut(\Gamma,w)$ of  a weighted graph  $(\Gamma,w) $  
is given by triples $\alpha=(\alpha_V,\alpha_E,\alpha_L)$  
as in the previous item,   ignoring the condition on the length. 

\item  A weighted graph with $n$ legs, and hence  an $n$-pointed tropical curve, has finitely many
 automorphisms.
\end{enumerate}
\end{nota}

\begin{remark}
In the definition of $n$-pointed tropical curve we did not require that the points be  distinct, i.e. that the legs have different endpoints. This is because we shall work  modulo tropical equivalence, which does not
preserve this property. On the other hand,
 every tropical equivalence class of $n$-ponted curves contains representatives whose marked points are distinct; see \cite[Prop. 2.4.10]{CHBK}.
\end{remark}

The  addition of a weight function to  a   tropical curve (introduced in \cite{BMV}) is a way to fix  the fact that the set of pure tropical curves of given genus in not closed under specialization.
More precisely, families of tropical curves are given by letting the length of the edges vary. Now if some length goes to zero, it may very well happen that some cycle  gets contracted, and hence the first Betti number drops.  This problem does not arise when considering weighted tropical curves, as we are going to explain. 

First, let us formalize the process of edge length going to zero.
Fix a weighted graph  $(\Gamma,w)$, and  $S\subset E(\Gamma)$.
The {\it weighted contraction} of $S$ 
  is the weighted graph $(\Gamma/S,w/S)$,
 where 
  $\Gamma/S$  is defined in subsection~\ref{cont} in case $L(\Gamma)=\emptyset$;
  if $L(\Gamma)$ is not empty, the definition is trivially adjusted so that there is a natural identification
  between $L(\Gamma)$ and $L(\Gamma/S)$. 
To define  $w/S$, recall that we have a natural 
map $\sigma:\Gamma \to \Gamma/S$ and a natural surjection
 $\sigma_V:V(\Gamma)\to V(\Gamma/S)$.
 We set for every $\ov{v}\in V(\Gamma/S)$
 \begin{equation}
\label{wS}
 w/S(\ov{v})=b_1(\sigma^{-1}(\ov{v}))+\sum_{v\in \sigma_V^{-1}(\ov{v})}w(v).
  \end{equation}
 We write
\begin{equation}
\label{min}
(\Gamma,w) \geq (\Gamma' ,w')\  {\text{ if  }} \  (\Gamma' ,w'){\text{ is a weighted contraction of  }} (\Gamma,w).
\end{equation}
 
 \begin{remark}
Suppose $(\Gamma,w) \geq (\Gamma' ,w')$. 
Then one easily checks the following  properties
\begin{enumerate}
\item
$|L(\Gamma)|=|L(\Gamma')|$.
\item
$ 
g(\Gamma, w)=g(\Gamma', w') 
$ 
(by  identity \eqref{gendec}  and  remark~\ref{lme}).
\item
 If
  $(\Gamma,w)$ is stable, 
so is  $(\Gamma' ,w')$.
\end{enumerate}
 Therefore, the set of stable genus-$g$ graphs with $n$ legs is closed under weighted contractions.
\end{remark}

\subsection{The moduli space of pointed tropical curves}
\label{modss}
From now on we shall consider tropical curves up to tropical equivalence. Therefore we  will assume that our weighted graphs are stable.

Let us fix the stable graph $(\Gamma, w)$ with $n$ legs,
let $g=g(\Gamma, w)$, and let us consider the space $M(\Gamma,w)$
of isomorphism classes of
tropical curves having  $(\Gamma, w)$ as combinatorial type.
More precisely, we have a natural identification:
$$
M(\Gamma,w)=(\R_{>0})^{E(\Gamma)}/\Aut (\Gamma, w)
 $$
 where an automorphism $(\alpha_V,\alpha_E,\alpha_L)\in \Aut (\Gamma, w)$ acts by permuting the coordinates of 
 $(\R_{>0})^{E(\Gamma)}$ according to $\alpha_E$; see item (\ref{iso}).
In particular, $M(\Gamma,w)$ is an orbifold of dimension $|E(\Gamma)|$,
since $\Aut (\Gamma, w)$ is finite.
The set $ M(\Gamma,w)$ is thus a topological space, with the quotient topology induced by the euclidean topology. 

We recall the following well known and easy to prove fact:
\begin{remark}
\label{3r}
Let $(\Gamma, w)$ be a genus $g$ stable graph with $n$ legs.
Then  $|E(\Gamma)|\leq 3g-3+n$ and equality holds if and only if $\Gamma$ is a $3$-regular graph with
$b_1(\Gamma)=g$. Moreover, 
in this case we necessarily have
 $w=\underline{0}.$ \end{remark}

 We now introduce the moduli space, $\Mgtn$, of  $n$-pointed tropical curves of genus $g$:
\begin{equation}
\label{Mg}
\Mgtn=  \bigsqcup_{\stackrel{(\Gamma,w)  {\text{ stable}}}{\text{genus }g, \  n \text{ legs }}} {M(\Gamma,w)}.
 \end{equation}
The following statement is a summary of  some of the properties of $\Mgtn$ (see \cite{CHBK} for details;  in the   case $n=0$ some of the properties below are proved also in \cite{BMV}).
 \begin{fact}
  \label{Mgt}
Assume $2g-2+n>0$ and let $(\Gamma,w)$ be a stable graph of genus $g$ with $n$ legs.
  \begin{enumerate}
 \item
$\Mgtn$ is endowed with a topology such that  the  natural  injection  $M(\Gamma,w)\ha \Mgtn$  is a homeomorphism with its image.
\item
\label{corr2} With the notation   \eqref{min}, we have
$$
  M  (\Gamma', w')\subset\ov{M (\Gamma, w)}
\Leftrightarrow (\Gamma, w)\geq (\Gamma', w').$$
\item

  \label{Mgtr}
Let $\Mgrn\subset \Mgtn$ be the subset  parametrizing regular curves, i.e.
 $$
 \Mgrn =\bigsqcup_{\stackrel{|L(\Gamma)|=n, \  b_1(\Gamma)=g}{\Gamma \  \  3-{\text{regular}}}}{M(\Gamma,\underline{0})}\subset \Mgtn.
 $$
Then 
$\Mgrn$   is  open and dense in $\Mgtn$.

\item
  \label{Mgtp}
 Let $\Mgpnn$ be the subset  parametrizing pure tropical curves.  
Then  $\Mgpnn$   is   open and dense $\Mgtn$.

\item
  \label{MgtH}
 $\Mgtn$ is a connected, Hausdorff topological space of pure dimension $3g-3+n$.
 \end{enumerate}
\end{fact}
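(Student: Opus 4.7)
The plan is to produce $\Mgtn$ as a topological space by assembling the orbifold charts $M(\Gamma,w)=(\R_{>0})^{E(\Gamma)}/\Aut(\Gamma,w)$ along the weighted contraction maps of Subsection~\ref{list}, and then to deduce each of the five properties from this construction.

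Concretely, I would extend each chart by allowing zero lengths: a tuple in $(\R_{\geq 0})^{E(\Gamma)}/\Aut(\Gamma,w)$ vanishing exactly on $S\subset E(\Gamma)$ represents the tropical curve with combinatorial type $(\Gamma/S,w/S)$ and lengths given by the remaining coordinates. The identifications across different extended charts are determined by the weighted contraction data, and they are mutually compatible because $w/S$ depends only on the combinatorics of $\Gamma$ and $S$. Gluing yields $\Mgtn$ with a natural quotient topology, from which parts~(1) and~(2) are immediate: the injection $M(\Gamma,w)\hookrightarrow \Mgtn$ is a chart by construction, and a boundary point of $\ov{M(\Gamma,w)}$ is produced precisely by letting some subset $S\subset E(\Gamma)$ of coordinates tend to $0$, which by definition lands in $M(\Gamma/S,w/S)$; running the limit in the reverse direction shows that every weighted contraction actually occurs as a closure.

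For parts~(3) and~(4), Remark~\ref{3r} identifies the top-dimensional charts as the $M(\Gamma,\underline{0})$ with $\Gamma$ trivalent of first Betti number $g$ and with $n$ legs; these are open in $\Mgtn$ by construction, so $\Mgrn$ is open. For density one shows that every stable $(\Gamma,w)$ is a weighted contraction of some trivalent $(\Gamma_0,\underline{0})$: resolve each weighted vertex $v$ of weight $m$ by attaching at $v$ a small trivalent graph with $b_1=m$ and weight $\underline{0}$, then resolve any remaining vertex of valency strictly greater than $3$ by inserting auxiliary short edges. Contracting the newly introduced edges recovers $(\Gamma,w)$, so by~(2) we have $M(\Gamma,w)\subset \ov{M(\Gamma_0,\underline{0})}$. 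The analogous construction, carrying out only the weight resolution step, gives density of $\Mgpnn$; openness of $\Mgpnn$ holds because purity is preserved under small perturbations of lengths within a pure chart.

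Finally, for part~(5), pure dimension $3g-3+n$ is immediate from Remark~\ref{3r} together with parts~(2) and~(3), since every stratum lies in the closure of a top-dimensional one. Connectedness reduces to the observation that from any trivalent $(\Gamma,\underline{0})$ of genus $g$ with $n$ legs one can contract every edge at once to reach the \emph{totally degenerate} stable type consisting of a single vertex of weight $g$ carrying all $n$ legs (which is stable under the standing hypothesis $2g-2+n>0$); by~(2) every top-dimensional stratum has this common point in its closure. I expect Hausdorffness to be the main technical obstacle: one has to verify that if two sequences in different charts accumulate to the same boundary point, then the weighted contractions they determine are related by an isomorphism, which requires a careful bookkeeping of the $\Aut(\Gamma,w)$-actions on the glued charts.
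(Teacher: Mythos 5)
First, a point of reference: the paper does not actually prove this statement --- it is labelled a Fact and deferred to \cite{CHBK} (and to \cite{BMV} for $n=0$), with only Remark~\ref{dim} sketching the dimension count and the connectedness argument. Your construction --- gluing the extended cones $(\R_{\geq 0})^{E(\Gamma)}/\Aut(\Gamma,w)$ along the weighted contraction identifications, reading off parts (1)--(2) from the gluing, obtaining density by exhibiting every stable type as a weighted contraction of a $3$-regular weight-$\underline{0}$ type, and degenerating everything to the one-vertex type of weight $g$ for connectedness --- is exactly the construction of the cited references, and your part (5) reproduces Remark~\ref{dim} essentially verbatim. So the route is the intended one.

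Two caveats. The justification you give for openness of $\Mgpnn$ (``purity is preserved under small perturbations of lengths within a pure chart'') is not an argument for openness in $\Mgtn$: perturbing lengths inside a pure stratum is irrelevant; what must be shown is that no stratum with $w\neq\underline{0}$ accumulates onto a pure stratum. This does hold, but the reason is formula \eqref{wS}: if $(\Gamma'',w'')\geq(\Gamma,\underline{0})$, then each value $b_1(\sigma^{-1}(\ov{v}))$ and each $w''(v)$ must vanish, since both terms in \eqref{wS} are nonnegative; hence $(\Gamma'',w'')$ is itself pure, and part (2) then shows the complement of $\Mgpnn$ is closed. (The analogous maximality statement, via Remark~\ref{3r}, is what makes $\Mgrn$ open, and there you do invoke the right fact.) Second, Hausdorffness is genuinely not established: you correctly identify where the difficulty lies --- matching up, through the $\Aut(\Gamma,w)$-actions and the contraction identifications, two sequences in different extended charts with a common limit --- but you do not resolve it, and it is the one assertion of the Fact that does not follow formally from the gluing. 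The paper itself only cites \cite[sect. 3.2]{CHBK} for this, so your proposal is in the same position as the text; as a self-contained proof, however, it is incomplete at exactly this point.
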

  
\begin{remark}
\label{dim}
We need to explain  the meaning of the last statement. 
Recall that   a topological space $X$ containing a dense open subset $U$, where $U$ is an orbifold 
 (locally the quotient of a topological manifold by a finite group) of dimension $d$,  is said to have     {\it pure dimension} $d$.  

Now, by part (\ref{Mgtr}), $\Mgtn$ contains the dense open subset $U=\Mgrn$, which is an orbifold has dimension $3g-3+n$,
by Remark~\ref{3r}. This explains the claim on the dimension. 
Connectedness of $\Mgtn$ is trivial, since   every $(\Gamma, w)$ satisfies
$(\Gamma, w)\geq (\Gamma^*, w^*)$ where $(\Gamma^*, w^*)$ is the graph having   no edges, only one vertex of weight $g$, and $n$ legs attached to it. 
 The fact that $\Mgtn$ is Hausdorff is proved  in   \cite[sect. 3.2]{CHBK}.
\end{remark}
 \subsection{Connectedness properties of tropical moduli spaces.}
\label{scho}
In this last subsection we apply our Linkage Theorem~\ref{main} to the geometry of some moduli
spaces of tropical curves.
 
 To begin with, we have said that $\Mgtn$ is connected; but a stronger form of connectedness holds,
 namely $\Mgtn$, and likewise $\Mgpnn$,  is  connected through codimension one; see Definition~\ref{conndef}. This property is one that is fundamental  for tropical varieties defined by prime ideals
(see \cite{MS}). Although $\Mgtn$ and $\Mgpnn$ are not known to be tropical varieties in general
(the case $g=0$ is a well known   exception),   their connectedness through codimension one
is   a sign of their being somewhat close to tropical varieties.

The next definition is adapted from  \cite[Definition 3.3.2]{MS}.
\begin{defi}
\label{conndef}
Let $X$ be a topological space of  pure dimension $d$;
see \ref{dim}.  
Assume that $X$ is endowed with a decomposition $X=\sqcup_{i\in I}X_i$,
where every $X_i$ is a connected orbifold.
 We say that $X$ is {\it connected through codimension one} if the subset
 $$
 \bigsqcup_{i\in I: \dim X_i\geq d-1}X_i\subset X
 $$
 is connected. 
\end{defi}

Notice that if $X$ is pure dimensional and connected through codimension one,
 then $X$ is connected.

Now, observe that the notion of linked graphs, given in Definition~\ref{link}, extends word for word to graphs with legs.
We can therefore state the following result, which is a consequence of Theorem~\ref{main}.

\begin{prop}
\label{linkn} Let $\Gamma_1$ and $\Gamma _2$ be  two  $3$-regular  graphs  with $n$ legs and 
$ b_1(\Gamma_1)=b_1(\Gamma_2)$.
Then $\Gamma_1$ and $\Gamma _2$ are linked.
\end{prop}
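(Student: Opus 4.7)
The plan is to mimic the proof of Theorem~\ref{main} for $p=3$, adapting each step to the leg setting. The key observation is that legs play a passive role under one-edge contractions: a leg is a half-edge attached to a vertex, never an edge of the graph, so under any contraction $\sigma:\Gamma \to \Gamma/e$ the set of legs is preserved and each leg is simply transported to the vertex $\sigma(v)$ where $v$ was its endpoint. Strong linkage of two $3$-regular graphs with $n$ legs makes sense verbatim, and so do the notions of hamiltonian graph and polygon once we restrict attention to graphs in which every vertex has at most one leg.

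First I would show that every $3$-regular graph with $n$ legs is linked to one in which every vertex carries at most one leg. A vertex with $k\geq 2$ legs has edge-valency $3-k\leq 1$, so it is either isolated (impossible unless the graph is a single vertex) or a leaf; contracting its unique adjacent edge $e$ merges it with a $3$-valent neighbor, and a compensating strong linkage that splits off a new edge can be used to redistribute the legs onto distinct vertices. Second, once each vertex carries at most one leg, every vertex has edge-valency at least $2$ and can lie on a cycle, so the proofs of Proposition~\ref{poly} and Lemma~\ref{noloops} transfer essentially verbatim (the valency-reducing extensions only modify the $4$-valent vertex created by a contraction, which has no legs) and produce a linkage from $\Gamma_i$ to a $3$-hamiltonian graph $H_i$ with $n$ legs. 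Third, I would run the twist argument of Lemma~\ref{linktwist} and the induction on $\epsilon(H_i)$ from the proof of Theorem~\ref{main} to link $H_i$ to the $3$-polygon $\Pi_{2g-2+n}^3$ with one leg attached to each of $v_1,\ldots,v_n$.

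The main obstacle is the final step: chord-twists of Lemma~\ref{linktwist} permute chords but fix the hamiltonian cycle and thus fix the leg-to-vertex assignment, whereas the canonical form requires a specific assignment. One therefore needs an additional class of linkage moves that slide a leg from one vertex to an adjacent vertex along the hamiltonian cycle. Such a move is realized by a strong linkage whose shared contraction merges two adjacent cycle-vertices $v_i, v_{i+1}$ carrying legs $\ell, \emptyset$ on one side and $\emptyset, \ell$ on the other side: both contractions give the same graph with a single $4$-valent vertex carrying the leg $\ell$. Iterating, any leg can be slid to any vertex of the cycle, so any two leg-assignments can be equalized; once equalized, $\Gamma_1$ and $\Gamma_2$ are both linked to the same canonical polygon with legs and hence to each other.
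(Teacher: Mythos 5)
Your strategy---redo the whole machinery of Theorem~\ref{main} with legs carried along, plus a leg-sliding move---is genuinely different from what the paper does, but as written it has a real gap in its final step. The canonical form you name, ``$\Pi_{2g-2+n}^3$ with one leg attached to each of $v_1,\ldots,v_n$,'' is not a well-defined target: $\Pi_{2g-2+n}^3$ is the legless $3$-polygon on $2g-2+n$ vertices, which has first Betti number $g+n/2$ (and does not exist when $2g-2+n$ is odd), and attaching a leg to any of its vertices makes that vertex $4$-valent. A correct canonical form would have to be a cycle of length $2g-2+n$ carrying only $g-1$ chords, with $n$ of the vertices receiving a leg instead of a chord-end. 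More seriously, the induction on $\epsilon(\Gamma)$ from the proof of Theorem~\ref{main} cannot be run verbatim, because Lemma~\ref{short} fails in the presence of legs: its counting argument rests on the identities $\gamma=(2b-2)/(p-2)$ and ``$b-1$ chords, at most $p-2$ per vertex,'' whereas with $n$ legs one has $\gamma=2g-2+n$ but still only $g-1$ chords, so the lower bound $|D|\geq b-j(p-2)$ becomes $g-j$, which can be nonpositive. Concretely, for $g=2$, $n=2$ a hamiltonian graph has a $4$-cycle, two legs and a single chord, which may be short with no second chord available to twist it against; your pair-twisting step has nothing to act on. So the amplitude-reducing induction, the heart of the argument, needs a genuinely new mechanism (presumably combining your leg-slide with single-chord moves), and your first reduction (``redistribute the legs onto distinct vertices'') is also only sketched and does not visibly terminate when adjacent vertices both carry two legs.

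For comparison, the paper avoids all of this by inducting on $n$ rather than re-proving the hamiltonian/polygon classification with legs. Given $\Gamma\in\mathcal{G}(n)$, it deletes one leg together with its trivalent endpoint (closing up the two remaining edge-germs) to land in $\mathcal{G}(n-1)$, and proves a local claim: all graphs obtained from a fixed $\Gamma'\in\mathcal{G}(n-1)$ by re-inserting a vertex-plus-leg are linked to one another, by an induction on the edge-path distance between the two insertion points (if the two new vertices are adjacent to a common vertex $w$, contracting the two edges to $w$ exhibits a strong linkage; otherwise one slides the insertion point one edge closer). This claim, combined with lifting each strong linkage in $\mathcal{G}(n-1)$ to $\mathcal{G}(n)$ by inserting the leg away from the contracted edge, reduces everything to the legless case $n=0$, i.e.\ to Theorem~\ref{main} as a black box. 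If you want to salvage your approach, you would need to correctly identify the canonical form with legs and replace Lemma~\ref{short} by an argument that also permits moves involving a single chord and a leg; the paper's induction on $n$ is substantially shorter.
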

\begin{proof}
Of course, $|E(\Gamma_1)|=|E(\Gamma_2)|$; we can assume $|E(\Gamma_i)|\geq 2$ for otherwise the result is trivial.
We use induction on $n$; the base case $n=0$ is a special case of Theorem~\ref{main}.

Suppose $n\geq 1$; let us denote by $\mathcal{G}(n)$ the set of $3$-regular graphs of genus $g$ with $n$ legs. Let $\Gamma\in \mathcal{G}(n)$, pick a leg $l\in L(\Gamma)$ and let $v\in V(\G)$ be its endpoint.
Let $\Gamma'$ be the closure of the graph obtained by removing $l$ and $v$ from $\Gamma$.
It is clear that $\Gamma'\in \mathcal{G}(n-1)$.  Notice that, of course, every $\Gamma\in \mathcal{G}(n)$ is obtained  by adding a leg and its endpoint to
some graph in $\mathcal{G}(n-1)$. 

\noindent{\bf Claim.}
{\it Fix  a graph $\Gamma'\in \mathcal{G}(n-1)$; any two graphs in $\mathcal{G}(n)$  obtained by adding to $\Gamma'$ a leg and its endpoint  are linked.}

The claim implies our Proposition. Indeed, let $\Gamma'_1, \Gamma'_2\in  \mathcal{G}(n-1)$ be such that
for, some $e'_i\in E(\Gamma'_i)$ we have
\begin{equation}
\label{conteq'}
\Gamma'_1/e'_1=\Gamma_2'/e'_2.
\end{equation}
Let $\Gamma_1\in  \mathcal{G}(n)$ be obtained by adding to $\Gamma'_1$ a leg  whose endpoint is not in the interior of $e'_1$. Then, by \eqref{conteq'}, there exists a $\Gamma_2\in  \mathcal{G}(n)$ obtained by adding a leg and its endpoint to $\Gamma'_2$ such that $\Gamma_1/e_1=\Gamma_2/e_2$; so $\Gamma_1$ is linked to $\Gamma_2$. Hence, by the claim, we get that all graphs in $\mathcal{G}(n)$
obtained from $\Gamma'_1$ are linked to those obtained from $\Gamma'_2$. By the induction hypothesis
  every pair of elements in $\mathcal{G}(n-1)$
is linked, so  we are done.

It remains to prove the claim.  For $i=1,2$, let $\G_i\in \mathcal{G}(n)$ be the graph obtained by adding to $\Gamma'$ a vertex $v_i$ 
(in the interior of some edge or leg of $\G '$) and a leg $l_i$ adjacent to $v_i$.
We must show that $\G_1$ and $\G_2$ are linked.
%Let $\G''$ be the graph obtained by adding $v_1$ and $v_2$ to $\G'$,
%so that $\G''$ is a subgraph of both $\G_1$ and $\G_2$, and contains $\G'$ as subgraph.
Pick $w\in V(\Gamma')\subset V(\G_i)$;
  for $i=1,2$ the vertex $v_i$     can be joined to $w$ by some path $\Pi_i$ of minimal length contained in $\G_i$;
let  $h_i$ be the edge-length of $\Pi_i$, where $h_i$ is a positive integer, since $w\neq v_i$; we call $h_i$   the   edge-path length from $v_i$ to $w$.
Let $h=h_1+h_2$; if $h=2$, i.e. if $h_1=h_2=1$, there exists an edge $e_i\in E(\Gamma_i)$
whose endpoints are $w$ and $v_i$. It is clear that 
$$
\Gamma_1/e_1=\Gamma_2/e_2
$$
so we are done. We continue by induction on $h$.

Suppose $h \geq 3$, and let $h_1\geq 2$. 
Let $e_1$ be the first edge of $\Pi_1$  so that  $v_1$ is an   endpoint of $e_1$;
 Consider the graph  $\Gamma_1/e_1$. The coming construction is illustrated in the picture below.
Now let $u$ be the other endpoint of $e_1$ and let $f$ be the next edge of $\Pi_1$, starting at $u$; by construction, $f$ is also an edge of $\Gamma'$.
Let $\Gamma_3\in \mathcal{G}(n)$ be the graph obtained from $\Gamma'$ by adding a vertex $v_3$ in the interior of $f$ and a leg attached to it. Now, $\Gamma_3$ has a unique edge $e_3$ whose endpoints are $u$ and $v_3$. The edge-path length from $v_3$ to $w$ is $h_1-1$, hence by induction $\Gamma_3$ is linked to $\Gamma_2$.
On the other hand it is immediately clear that 
  $$
\Gamma_1/e_1=\Gamma_3/e_3,
$$
hence  $\Gamma_3$ and $\Gamma _1$ are linked, and so $\Gamma _1$ is also linked to $\Gamma_2$ .
\end{proof}
The next picture represents the construction used to prove the claim, with $g=2$ and $n=3$..
\begin{figure}[h]
\label{1fig}
\begin{equation*}
\xymatrix@=.5pc{
&&&&&&&&&&&&&&&&&&&&&&&&&&&&&&&&&&&\\
&&&&\Gamma'=&&*{\bullet} \ar@{-}[ul]\ar@{-}[dl]\ar@{-}[rrr]&&&*{\bullet}\ar@{-}@/_1.9pc/[rr] \ar@{-}[rr]^<{u}_(.5){f}
&& *{\bullet}\ar@{-}[r]^>{w}& *{\bullet}\ar@{-}@(ur,dr)&&&\\
&&&&&&&&&&&&&&&&&&&&&&&&&&&&&&&&&&&&\\
&&&&&&&&&&&&&&&&&&&&&&&&&&&&&&&&\\
&&&&&&&&&&&&&&&&&&&&&&&&&&&&&&&&\\
&&&&&&&&&&&&&&&&&&&&&&&&&&&&&&&&&&&&\\
\Gamma_1=&&*{\bullet} \ar@{-}[ul]\ar@{-}[dl]\ar@{-}[r]&*{\bullet}\ar@{-}[r]^<{v_1}_>(.6){e_1}
\ar@{-}[dl]^(.7){l_1}&*{\bullet}\ar @{-} @/_1.9pc/[rr] \ar@{-}[rr]^<{u}_(.5){f}
&&*{\bullet}\ar@{-}[r]^>{w}& *{\bullet}\ar@{-}@(ur,dr)&&&&\Gamma_3=& 
&*{\bullet} \ar@{-}[ul]\ar@{-}[dl]\ar@{-}[rr]&&*{\bullet}\ar@{-}@/_1.9pc/[rrr] \ar@{-}[rr]^<{u}_>(.6){e_3}&&*{\bullet}\ar@{-}[r]^<(.3){v_3}\ar@{-}[ul]_(.8){l_3}
&*{\bullet}\ar@{-}[r]^>{w}& *{\bullet}\ar@{-}@(ur,dr)&&&&\\
&&&&&&&&&&&&&&&&&&&&&&&&&&&&&&&&&&&\\
}
\end{equation*}
\caption{$\Gamma_1$ and $\Gamma_3$ linked, obtained from $\Gamma'$ (proof of \ref{linkn}).} 
\end{figure}
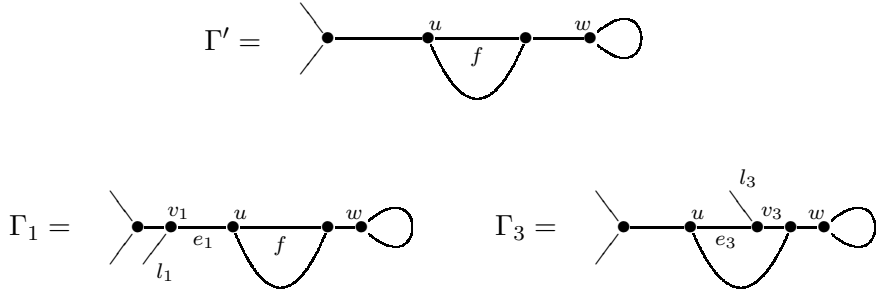
\

From Proposition~\ref{linkn} we easily get:
\begin{prop}
\label{connt1}
The spaces $\Mgtn$ and $\Mgpnn$ are connected  through codimension one.
\end{prop}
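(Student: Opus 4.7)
The plan is to translate the combinatorial linkage of Proposition~\ref{linkn} into a topological path through codimension-one strata in the stratifications \eqref{Mg} of $\Mgtn$ and $\Mgpnn$. By Remark~\ref{3r} together with Fact~\ref{Mgt}(\ref{Mgtr}), the strata of maximal dimension $d = 3g-3+n$ are precisely the $M(\Gamma, \underline{0})$ with $\Gamma$ a $3$-regular stable graph of genus $g$ with $n$ legs, and they are pure, hence belong to both $\Mgtn$ and $\Mgpnn$. The strata of dimension $d-1$ are those $M(\Gamma', w')$ with $|E(\Gamma')| = 3g-4+n$, and by Fact~\ref{Mgt}(\ref{corr2}) such a stratum lies in $\overline{M(\Gamma, \underline{0})}$ exactly when $(\Gamma', w')$ is a weighted contraction of $(\Gamma, \underline{0})$.

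The main step is to connect any two top-dimensional strata through codim-one strata. Given $3$-regular graphs $\Gamma_1, \Gamma_2$ of genus $g$ with $n$ legs, Proposition~\ref{linkn} provides a sequence $\Gamma_1 = \Lambda_1, \Lambda_2, \ldots, \Lambda_k = \Gamma_2$ of $3$-regular graphs in which consecutive graphs are strongly linked: $\Lambda_j/e_j = \Lambda_{j+1}/e_{j+1} =: \Lambda_j'$ for suitable non-loop edges $e_j \in E(\Lambda_j)$, $e_{j+1} \in E(\Lambda_{j+1})$. Then $|E(\Lambda_j')| = 3g-4+n$, so $M(\Lambda_j', \underline{0})$ is a codim-one pure stratum, and by Fact~\ref{Mgt}(\ref{corr2}) it sits in the closure of both $M(\Lambda_j, \underline{0})$ and $M(\Lambda_{j+1}, \underline{0})$. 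Consequently $M(\Lambda_j, \underline{0}) \cup M(\Lambda_j', \underline{0}) \cup M(\Lambda_{j+1}, \underline{0})$ is connected, and concatenating these unions shows that every pair of top-dim strata lies in a connected subspace made of top-dim and codim-one strata, all of which are pure.

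To finish, one has to show that every codim-one stratum of $\Mgtn$ (resp.\ of $\Mgpnn$) is attached to this connected skeleton. This is immediate from density: $\Mgrn$ is dense in $\Mgtn$ by Fact~\ref{Mgt}(\ref{Mgtr}), and therefore also dense in $\Mgpnn$ by Fact~\ref{Mgt}(\ref{Mgtp}), so every codim-one stratum $M(\Gamma', w')$ lies in the closure of some top-dim stratum $M(\Gamma, \underline{0})$, i.e., its union with that stratum is connected. Combining with the skeleton produced above yields the connectedness of $\bigsqcup_{\dim \geq d-1} M_i$, which is exactly Definition~\ref{conndef}.

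I expect no serious obstacle: once one has Proposition~\ref{linkn}, the rest is bookkeeping around the closure relation of Fact~\ref{Mgt}(\ref{corr2}) and the density statements. The one point to be checked carefully is that the codim-one bridging strata $M(\Lambda_j', \underline{0})$ used in the linkage really lie in $\Mgpnn$ (which they do, since $\underline{0}$ is the zero weight) and that $(\Lambda_j', \underline{0})$ is stable (which is forced since its uncontraction $\Lambda_j$ is $3$-regular, so every vertex of $\Lambda_j'$ has valency $\geq 3$).
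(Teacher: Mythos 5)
Your proof is correct and follows essentially the same route as the paper: the paper's own argument simply observes that, given the pure dimension statement and the closure relation of Fact~\ref{Mgt}(\ref{corr2}), the result reduces to Proposition~\ref{linkn}. Your write-up fills in the same bookkeeping (bridging strata from strong linkage, stability and purity of the contracted graphs, attaching the remaining codimension-one strata via density) that the paper leaves implicit.
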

\begin{proof}
From Fact~\ref{Mgt} we have that $\Mgtn$ and $\Mgpnn$ are of pure dimension $3g-3+n$.
Also, we know that $\dim  M(\Gamma, w) =|E(\Gamma)|$.
So, by \ref{Mgt} (\ref{corr2})
to prove our statement it suffices to observe that any two $3$-regular graphs are linked,
as stated in Proposition~\ref{linkn}.
\end{proof}

\begin{remark}
As proved in \cite[Prop. 3.2.5]{BMV}, the above result in case $n=0$ follows from \cite[Prop. page 236]{HT}, which is a remarkable and well
known   special case of our Theorem~\ref{main}. 
\end{remark}
\begin{nota}{\it The tropical Torelli map and the Schottky locus.}
We will now prove that   connectedness  through codimension
one holds for other tropical moduli spaces.

In analogy with the classical situation we have a tropical Torelli map
$$
\tgt:\Mgt \to \Agt
$$
to the moduli space of tropical Abelian varieties, mapping a curve to its tropical Jacobian  (see \cite{MZ}, \cite{CV} and \cite{BMV} for details). 
We denote by $\Sgt$ the image of $\tgt$, and refer to it, as it is customary,
as the tropical Schotty locus in $\Agt$. 
A detailed  analysis of $\Sgt$ for small values of $g$ is carried out in \cite{chan}. 

For our purposes 
  $\Sgt$ can be identified with the topological quotient
$$
\Sgt:=\Mgt/\equiv_{\tgt}
$$
where $[(\Gamma, \ell ,w)]\equiv_{\tgt}[(\Gamma, \ell ,w)] \Leftrightarrow \tgt([(\Gamma, \ell ,w)])=\tgt([(\Gamma', \ell ',w')]).$
For more  structure on $\Agt$ and $\Sgt$
we refer to \cite{BMV}.  In particular, Theorem 5.2.4 of loc. cit. gives a precise characterization of the tropical Schotty locus $\Sgt$ in $\Agt$, in such a way that  the Schottky problem has a satisfactory answer    in tropical geometry.

As proved in \cite[Thm 4.1.9]{CV},
and generalized by \cite[Thm 5.3.3]{BMV}, 
the Torelli map identifies curves having the same so-called ``3-edge-connected class''.
More precisely,   let us denote by $\Mge$ the locus
of tropical curves with 3-edge-connected graph:
$$
\Mgt \supset  \Mge:=\{[(\Gamma, \ell, w)]:\  \Gamma   {\text { is 3-edge-connected}}\}.
$$
Then
we have
$$
\tgt(\Mge)=\tgt(\Mgt)= \Sgt\subset \Agt.
$$
Furthermore,  the restriction  of $\tgt$   to $\Mge$, denoted by $\tgt[3]$,  is  injective on every subspace 
  $M(\Gamma,w)\subset \Mge$, and it identifies  two such spaces, $M(\Gamma,w)$ and $M(\Gamma',w')$,
only if the graphs $\Gamma$ and $\Gamma'$ are cyclically equivalent
(i.e. 2-isomorphic in the sense of Whitney, see \cite[Def 2.2.3]{CV}). In particular,  $\tgt[3]$ has finite fibers.

The previous results hold  in the special case of pure tropical curves
(in fact, they were first proved in this case, and then generalized to weighted tropical curves).
With self-explanatory notation, 
the  Torelli map for pure tropical curves is a surjection
$$
\tgp:\Mgp \la \Sgp:=\Mgp/\equiv_{\tgp}\subset\Agt,
$$
and the restriction of $\tgp$ to the locus of pure tropical curves with 3-edge connected graph,
$\Mgpe\subset \Mgp$, behaves exactly as $\tgt[3]$.

Now, the conservation of 3-edge-connectivity under linkage, proved in Theorem~\ref{main}, enables us to obtain the following result.
\begin{thm}
\label{conn1}
The spaces $\Mge$ and  $\Sgt$  have pure dimension equal to $3g-3$ and are
connected through codimension one.

The same holds for the spaces $\Mgpe$ and  $\Sgp$.
\end{thm}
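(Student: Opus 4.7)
The plan is to prove the statement for $\Mge$ first and transport it to $\Sgt$ via the Torelli map; the pure versions follow verbatim by restricting every argument to the locus $w=\underline{0}$. Write $\Mge=\sqcup M(\Gamma,w)$ indexed by the 3-edge-connected stable graphs $(\Gamma,w)$ of genus $g$, with $\dim M(\Gamma,w)=|E(\Gamma)|$. By Remark~\ref{3r}, the top-dimensional strata have dimension $3g-3$ and correspond to 3-regular 3-edge-connected graphs $\Gamma$ with $b_1(\Gamma)=g$ and $w=\underline{0}$; denote their union by $\Mgre$.

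For pure dimensionality I would show that every stratum of $\Mge$ lies in the closure of a stratum of $\Mgre$. By Fact~\ref{Mgt}(\ref{corr2}), this reduces to producing, for each 3-edge-connected stable $(\Gamma,w)$ of genus $g$, a 3-regular 3-edge-connected graph $\Gamma^*$ of genus $g$ with $(\Gamma^*,\underline{0})\geq(\Gamma,w)$. I would build $\Gamma^*$ by replacing each vertex $v$ by a small 3-regular 3-edge-connected graph of genus $w(v)$ carrying $\mathrm{val}(v)$ attaching half-edges, and then identifying those with the existing half-edges at $v$; the existence of the local model and the preservation of global 3-edge-connectivity after gluing are guaranteed by an iteration of the valency-reducing extensions of \cite[Prop.~A.2.4]{CV}, of the same flavour as the arguments employed in Proposition~\ref{poly} and Lemma~\ref{noloops}. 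Openness of $\Mgre$ in $\Mge$ is immediate since $\Mgrn$ is open in $\Mgtn$ by Fact~\ref{Mgt}(\ref{Mgtr}).

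For connectedness through codimension one of $\Mge$ I would analyse the codimension-one strata incident to $\Mgre$. Since a 3-regular 3-edge-connected graph has no loops (else the third half-edge at the loop vertex would be a bridge), every edge of such a graph is a non-loop edge, so its weighted contraction stays in the locus $w=\underline{0}$; moreover, the contracted graph remains 3-edge-connected by Remark~\ref{lme}(2), so the resulting codimension-one stratum lies inside $\Mge$. By Fact~\ref{Mgt}(\ref{corr2}), two top-dimensional strata $M(\Gamma_1,\underline{0})$ and $M(\Gamma_2,\underline{0})$ share a common codimension-one face in $\Mge$ precisely when $\Gamma_1$ and $\Gamma_2$ are strongly linked through a 3-edge-connected intermediate graph, that is, precisely in the sense of Definition~\ref{3link}. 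The 3-linkage half of Theorem~\ref{main} then provides, for any two such top-dimensional strata, a chain of strong 3-links joining them, so the union of top and codimension-one strata of $\Mge$ is connected.

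To pass to $\Sgt$ I would exploit that the restriction $\tgt[3]:\Mge\twoheadrightarrow\Sgt$ is a continuous surjection, injective on every $M(\Gamma,w)$ and with finite fibers identifying strata whose underlying graphs are cyclically equivalent. This transports the orbifold stratification of $\Mge$ to a compatible stratification of $\Sgt$ in which dimensions are preserved, so pure dimension $3g-3$ descends at once, and connectedness through codimension one descends because the continuous image of a connected set is connected. The same argument, with $w=\underline{0}$ enforced throughout, delivers the conclusion for $\Mgpe$ and $\Sgp$; in the pure case the resolution step becomes easier, reducing to the standard trick of unfolding vertices of valency at least four into trivalent trees. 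The main obstacle I foresee is precisely this resolution step in the weighted case: exhibiting a local 3-regular 3-edge-connected model of the correct genus at every vertex is straightforward, but ensuring that the resulting global graph $\Gamma^*$ is 3-edge-connected requires checking a global property produced by a purely local construction, which is the same phenomenon that forced the delicate additional arguments in the proofs of Proposition~\ref{poly} and Lemma~\ref{noloops}.
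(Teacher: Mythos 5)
Your proposal is correct and follows essentially the same route as the paper: pure dimensionality via the density of the regular 3-edge-connected locus in $\Mge$ (resolving weighted vertices by the valency-reducing extensions of \cite[Prop.~A.2.4]{CV}), connectedness through codimension one via the 3-linkage half of Theorem~\ref{main} together with the observation that a one-edge contraction drops the stratum dimension by exactly one, and descent to $\Sgt$ via the finiteness and stratum-wise injectivity of $\tgt[3]$. The only cosmetic slip is the ``precisely when'' identifying shared codimension-one faces with Definition~\ref{3link} (a single shared face corresponds to one strong link, not a chain), but the argument only uses the direction you actually need.
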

\begin{proof}
We prove the result for tropical curves; the proof for pure tropical curves follows precisely  the same lines (and it is actually simpler).
We
introduce the locus  of  regular,  3-edge-connected  curves
$$\Mgre\subset \Mgr\subset \Mgt.$$  
We have that the closure in $\Mgt$ of regular,  3-edge-connected curves is 
the locus of all 3-edge-connected curves,
i.e.
$$
\ov{\Mgre}=\Mge.
$$
This follows from \cite[Prop A.2.4]{CV}, whose proof (stated there only for pure regular curves) works also   in our setting (i.e. for weighted  tropical curves).
It is clear that $\Mgre$ is an orbifold of pure dimension $3g-3$. We conclude that  
   $\Mge$ has pure dimension $3g-3$.

Now, the   connectedness through codimension one follows from the part of Theorem~\ref{main}
concerning 3-edge connected graphs. It suffices to 
add that if $(\Gamma',w')$ is obtained from $(\Gamma,w)$ by contracting only one edge,
then $\dim M(\Gamma,w)=\dim M(\Gamma',w')+1$ (as we also did  for Proposition~\ref{conn1}).
This proves that $\Mge$ is connected through codimension one.

Now we turn to the Schottky locus; by what we said before there is a surjection with finite fibers 
$$
\tgt[3]:\Mge\la \Sgt  
$$ 
obtained by restricting the Torelli map.
This surjection
induces a homeomorphism with its image of every subspace $M(\Gamma,{\underline{0}})\subset \Mge$.
This implies that $\Sgt$ has pure dimension $3g-3$.
Furthermore, as $\tgt[3]$ is injective on every   $M(\Gamma,w)$, it preserves the dimension
of these subsets; therefore $\Sgt$ is connected through codimension one, because so is $\Mge$.
\end{proof}

\begin{remark}
What are the consequences on tropical moduli spaces  of the linkage theorem when $p\geq 4$?
Consider the subset
$$
M_g^{p{\rm{-reg}}}:=\bigsqcup_{\stackrel{\Gamma \  p{\text {-regular}}}{b_1(\Gamma)=g}}M(\Gamma, {\underline{0}}) \subset \Mgp
$$
and assume it is not empty.
By  a proof similar to that of Theorem~\ref{conn1} one obtains that
the closure   of $M_g^{p{\rm{-reg}}}$
is of pure dimension equal to $p(g-1)/(p-2)$, by Remark~\ref{count}
(this number is   an integer  by the non-emptyness assumption), and  
connected through codimension one.

The same holds if the above disjoint union is restricted to all 3-edge-connected and $p$-regular
graphs with $b_1(\Gamma)=g$. That is, with self-explanatory notation, the closure of $M_g^{p{\rm{-reg}}}[3]$ is of pure dimension   $p(g-1)/(p-2)$   and 
connected through codimension one.
\end{remark}

A space closely related  to $\Mgt$ is the outer space $O_g$ constructed in \cite{CuVo}, and its quotient by
the group $\Out(F_g)$ (outer automorphisms of the free group on $g$ generators $F_g$).
This quotient can be interpreted as a moduli space for metric graphs, 
and its connection with $\Mgt$ or $\Mgp$    is currently under investigation;
as it has not yet been completely unraveled, we will not be more specific about this point.
We just wish to   mention that Theorem~\ref{main} applied to $O_g$ yields analogous connectivity properties of certain subcomplexes of a remarkable deformation retract of $O_g$, called its ``spine" (defined in \cite[sect 1.1]{CuVo}).

\end{nota}

\end{document}